\NewDocumentCommand{\dgal}{sO{}m}{%
  \IfBooleanTF{#1}
    {\dgalext{#3}}
    {\dgalx[#2]{#3}}%
}
\NewDocumentCommand{\dgalext}{m}{%
  \sbox0{%
    \mathsurround=0pt % just for safety
    $\left\{\vphantom{#1}\right.\kern-\nulldelimiterspace$%
  }%
  \sbox2{\{}%
  \ifdim\ht0=\ht2
    \{\kern-.45\wd2 \{#1\}\kern-.45\wd2 \}%
  \else
%    \left\{\kern-.5\wd0\left\{#1\right\}\kern-.5\wd0\right\}%
  \fi
}
\NewDocumentCommand{\dgalx}{om}{%
  \sbox0{\mathsurround=0pt$#1\{$}%
  \sbox2{\{}%
  \ifdim\ht0=\ht2
    \{\kern-.45\wd2 \{#2\}\kern-.45\wd2 \}%
  \else
    \mathopen{#1\{\kern-.5\wd0 #1\{}
    #2
    \mathclose{#1\}\kern-.5\wd0 #1\}}
  \fi
}
\crefname{hypothesis}{Hypothesis}{Hypotheses}
\title{Polynomial preserving recovery for PHT-splines}
\author{Ying Cai\thanks{School of Mathematical Sciences, University of Science and Technology of China, Hefei 230026, China (\email{ycai@ustc.edu.cn}).}
\and Falai Chen\thanks{School of Mathematical Sciences, University of Science and Technology of China, Hefei 230026, China (\email{chenfl@ustc.edu.cn}).}
\and Hailong Guo\thanks{School of Mathematics and Statistics, The University of Melbourne, Parkville, VIC 3010, Australia (\email{hailong.guo@unimelb.edu.au}).}
\and Hongmei Kang\thanks{School of Mathematical Sciences, Soochow University, No. 1 Street Shizi, Suzhou, 215006, China (\email{khm@suda.edu.cn}).}
\and Zhimin Zhang\thanks{Department of Mathematics, Wayne State University, Detroit, MI 48202, USA (\email{ag7761@wayne.edu}).}
}
\newcommand{\comp}{\mathrm{comp}}
\newcommand{\ssubset}{\subset\subset}
\begin{document}

\maketitle

% REQUIRED
\begin{abstract}
We propose a polynomial preserving recovery method for PHT-splines within isogeometric analysis to obtain more accurate gradient approximations. The method fully exploits the local interpolation properties of PHT-splines and avoids the need for information on gradient superconvergent points. By leveraging the superconvergence argument of difference quotients and the interior error estimate, we establish the superconvergence property of the recovered gradient on translation invariant meshes. As a byproduct, a recovery-based \textit{a posteriori} error estimator is developed for adaptive refinement. Numerical results confirm the theoretical findings and demonstrate the effectiveness of the proposed method.
\end{abstract}

% REQUIRED
\begin{keywords}
gradient recovery, PHT-splines, isogeometric analysis, superconvergence, adaptive, {\it a posteriori} error estimate.
\end{keywords}

% REQUIRED
\begin{AMS}
 65N30, 65N25, 65N15, 65N50.
\end{AMS}

\section{Introduction}
\label{sec:int}
Isogeometric analysis (IGA),  introduced by Hughes et al.\ in their seminal work~\cite{Hughes2005}, establishes a unified framework to integrate Computer-Aided Design (CAD) with Computer-Aided Engineering (CAE). The fundamental principle of IGA lies in the consistent utilization of spline basis functions throughout both the geometric modeling and numerical analysis phases. This paradigm eliminates the conventional conversion process from CAD’s spline representations to piecewise linear finite element meshes, thereby enhancing computational efficiency and geometric accuracy in engineering simulations. Readers interested in the mathematical foundations of IGA may consult~\cite{IGAbook2009} for further details.

To accurately represent complex geometries, tensor-product splines often require a large number of redundant control points to achieve local mesh refinement. To overcome this limitation, the so-called T-splines were introduced \cite{Sederberg2003,Sederberg2004}, which allow T-junctions in the control mesh and provide greater flexibility in geometric modeling. However, as pointed out by Buffa et al. \cite{BCS2010}, the blending functions of T-splines may suffer from linear dependence. To address this issue, some researchers proposed analysis-suitable T-splines (AST-splines) \cite{SLSH2012,LS2014}, which form a subset of T-splines with desirable properties such as partition of unity, local support, and linear independence—features that are essential for both design and analysis. Nevertheless, the local refinement of AST-splines may extend beyond the region of interest, potentially increasing the number of control points. For a more comprehensive overview of locally refined splines in isogeometric analysis, we refer the reader to the review article \cite{survey}.

Among the various locally refinable spline technologies, polynomial splines over hierarchical T-meshes (PHT-splines) have gained significant attention due to their perfect local refinement property. PHT-splines were first introduced by Deng et al. in \cite{Deng2008}, where the dimension of the spline space and the explicit construction of the basis functions were provided. The newly constructed basis functions possess several desirable properties similar to those of B-splines, including non-negativity, local support, and partition of unity. Owing to these favorable mathematical characteristics, PHT-splines have found diverse applications in areas such as surface reconstruction \cite{wang2010adaptive} and the discretization of partial differential equations \cite{Tian2011,NN2011,NK2011,BGAR2022} and so on, among others.

In many mathematical models arising in scientific computing, the gradient of the solution plays a critical physical role. In fact, in various applications, the gradient is often of greater significance than the solution itself. Consequently, achieving high-accuracy gradient approximations is of great importance in both scientific and engineering contexts. In classical finite element methods, gradient recovery is widely recognized as one of the most important post-processing techniques, aiming to reconstruct a more accurate approximation of the gradient than the one directly obtained from the finite element solution. In the finite element framework, gradient recovery has been extensively investigated (\cite{ZN2005,NZ2004,NZ2005,ZZ1992,ZZ21992,GUO2025}), and various types of recovery techniques have been proposed and analyzed. Representative approaches include simple or weighted averaging methods (\cite{ZZ1992}), the superconvergent patch recovery (SPR) technique (\cite{ZZ1992,ZZ21992}), and the polynomial preserving recovery (PPR) method
(\cite{ZN2005,NZ2004,NZ2005,GZZ2017}). These gradient recovery techniques, particularly SPR and PPR, have been widely adopted in commercial finite element software—for example, SPR in Abaqus and LS-DYNA, and PPR in COMSOL Multiphysics.

Although gradient recovery techniques have demonstrated substantial effectiveness within the finite element method, analogous post-processing strategies have received comparatively limited attention in the context of isogeometric analysis (IGA). To the best of our knowledge, the only existing contributions in this direction are due to Mukesh et al.~\cite{KKJ2017,AKJKK2024}, who developed gradient recovery methods—primarily based on the superconvergent patch recovery (SPR) technique—for locally refined B-splines (LR B-splines). The principal aim of the present work is to develop a polynomial preserving recovery (PPR) method tailored to PHT-splines and to rigorously establish its superconvergence properties.

Our proposed method has several notable advantages. First, when constructing local patches, the selection of sampling points theoretically only requires the mesh basis vertices, while in practical computations, more flexible choices can be made.
 In contrast, the SPR technique typically requires sampling points to coincide with the gradient superconvergent points of the numerical solution, whose locations are often difficult to determine. In the context of isogeometric analysis, research on superconvergence is still in its early stages, and the theoretical foundation remains incomplete, especially for complex mesh configurations. In \cite{KKJ2017}, the existence and computation of the gradient superconvergent points were studied based on finite element analogies, and the proof relies on computer-based verification (see \cite{BS2001}). This highlights a key advantage of the PPR method. Second, this work focuses on PHT-splines, which offer a favorable local structure and admit an explicitly constructed interpolation operator. This operator requires only geometric information at the basis vertices to perform interpolation, greatly simplifying the process of assembling the recovered gradient into a global representation. Specifically, it suffices to solve a small local linear system of size $4\times4$, as opposed to techniques like conjoining the polynomial expansions in \cite{BB1994} used for LR B-splines.

In addition, another key contribution of this work lies in the superconvergence analysis of the recovered gradient. By employing the argument of difference quotient \cite{NS1974, Wa1995}, together with interior error estimates \cite{NS1974,SW1977,SW1995}, we establish a superconvergence result for the recovered gradient on translation-invariant meshes. Finally, as a byproduct of our analysis, we present a recovery type \textit{a posteriori} error estimator to support an adaptive refinement strategy based on PHT-splines.

The rest of the paper is organized as follows. In Section \ref{sec:notations}, we briefly review the construction of PHT-spline basis functions and the associated interpolation operator. Section \ref{sec:recovery} introduces the polynomial preserving recovery (PPR) operator, analyzes its main properties, and further discusses its extension to general physical domains as well as the development of a recovery-based \textit{a posteriori} error estimator. Section \ref{sec:super} presents a superconvergence analysis of the recovered gradient on translation-invariant meshes. Several numerical examples are provided in Section \ref{sec:num} to validate the theoretical results. Finally, conclusions are drawn in the last section.

\section{Notations and PHT-splines}\label{sec:notations}
In this section, we introduce some fundamental notations and then review the construction of PHT-splines.

Let $\Omega \subset \mathbb R^2$ be a rectangular domain with boundary $\partial\Omega$. Throughout this paper, we employ the standard notation for Sobolev spaces and their associated (semi)norms as presented in \cite{Ci1978}. Given a subdomain $\mathcal D \subset \Omega$, we denote by $\mathbb P_m(\mathcal D)$ the space of polynomials of degree less than or equal to $m$ defined on $\mathcal D$. The Sobolev space $W_p^k(\mathcal D)$ is equipped with the norm $\|\cdot\|_{k,p,\mathcal D}$ and seminorm $|\cdot|_{k,p,\mathcal D}$. In the particular case $p = 2$, we adopt the standard notation $H^k(\mathcal D) = W_2^k(\mathcal D)$, omitting the subscript $p$. $(\cdot,\cdot)_{\mathcal D}$ denotes the standard $L^2$ inner product on $\mathcal D$.

\subsection{Hierarchical T-mesh}
A T-mesh over the domain $\Omega$ is a rectangular partition formed by axis-aligned mesh lines matching the domain boundaries, while admitting T-junctions (see \cite{Sederberg2003, Deng2008}). Every element in this partition must be strictly rectangular. A characteristic T-mesh configuration is depicted in Fig. \ref{fig:Tmesh}.
\begin{figure}[ht]
\centering
\includegraphics[width=0.36\textwidth]{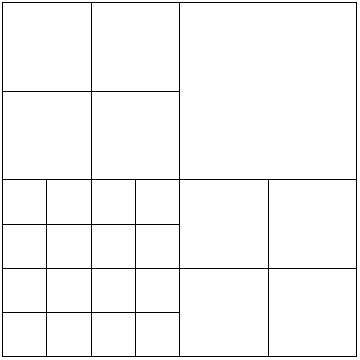}
\caption{An example of the T-mesh.}
\label{fig:Tmesh}
\end{figure}

A hierarchical T-mesh is a specialized T-mesh structure with an inherent level-based hierarchy. The construction begins with an initial tensor-product mesh $\mathcal T_0$ (level 0). At each refinement stage $k\geq0$, the T-mesh $\mathcal T_k$ is refined by uniformly subdividing selected elements into four congruent subelements, which then form the subsequent level mesh $\mathcal T_{k+1}$. This dyadic refinement process generates a nested sequence of meshes, where the mesh size at level $k$ satisfies $h_k = h_0 \cdot 2^{-k}$, with $h_0$ being the initial mesh size. Fig.~\ref{fig:hierarchicalT-mesh} demonstrates a typical hierarchical T-mesh configuration.
\begin{figure}[ht]
    \centering
    \subfigure{
        \includegraphics[width=0.3\textwidth]{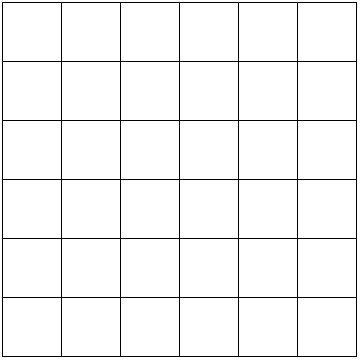}
    }
    \subfigure{
	\includegraphics[width=0.3\textwidth]{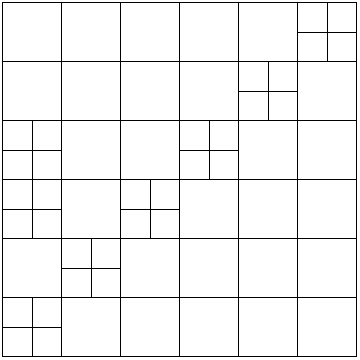}
    }
        \subfigure{
	\includegraphics[width=0.3\textwidth]{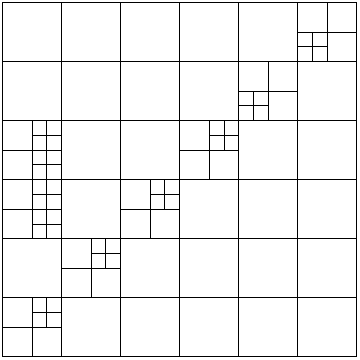}
    }
\caption{Example for hierarchical T-mesh (level 0, level 1 and level 2).}
    \label{fig:hierarchicalT-mesh}
\end{figure}

Consider a bivariate polynomial spline space defined over a two-dimensional T-mesh $\mathcal T$. The PHT-spline space of bi-degree $(m,n)$ with continuity orders $\alpha$ and $\beta$ along the $x$- and $y$-directions, respectively, is defined as
\[
\mathcal S(m,n,\alpha,\beta,\mathcal T) :=\{ s \in C^{\alpha,\beta}(\Omega): s|_K \in \mathbb P_{mn}(K), \, \forall K \in \mathcal T\},
\]
where $\mathbb P_{mn}$ denotes the space of bivariate polynomials of degree $m$ in $x$ and $n$ in $y$.

The spline space exhibits favorable theoretical and practical properties when the polynomial degrees satisfy the inequalities $m \geq 2\alpha+1$ and $n \geq 2\beta+1$, under which the dimension of the space can be determined explicitly \cite{DCF2006}. A canonical example is the $C^1$-continuous bi-cubic PHT-spline space $\mathcal S(3,3,1,1,\mathcal T)$, whose dimension is given by the compact formula
\begin{equation}\label{eq:dimformula}
\dim \mathcal{S}(3,3,1,1,\mathcal T) = 4(V^b + V^+),
\end{equation}
where $V^b$ denotes the number of boundary vertices and $V^+$ denotes the number of interior crossing vertices of $\mathcal T$.
These vertices are collectively referred to as basis vertices, with each contributing exactly four basis functions to the spline space, as reflected in the dimension formula \eqref{eq:dimformula}. In this work, we focus on cubic PHT-splines defined over the mesh $\mathcal T_h$, and denote the associated spline space concisely by $V_h$, where $h$ represents the characteristic mesh size. The set of all basis vertices is denoted by $\mathcal N_h$, and we let $|N_h|$ represent the total number of basis vertices in $\mathcal T_h$.

For each basis vertex  $z_i \in \mathcal N_h$, the corresponding four basis functions are denoted by ${\phi_{z_i}^1, \phi_{z_i}^2, \phi_{z_i}^3, \phi_{z_i}^4}$. For notational simplicity, we define the vector-valued function $\bm\phi_{z_i} = (\phi_{z_i}^1\,\, \phi_{z_i}^2\,\, \phi_{z_i}^3\,\, \phi_{z_i}^4)^T$.

\subsection{The basis function construction} With the dimension of space $V_h$ established, the explicit construction of basis functions becomes crucial. It should be noted that the construction of basis functions for PHT-spline spaces is not uniquely determined, as demonstrated in several existing works \cite{Deng2008,KXCD2015,ZC2017}.

Here, we mainly adopt the construction method of the original basis functions for PHT-splines as described in \cite{Deng2008}. The methodology builds basis functions through an level-by-level process. Assume that the hierarchical T-mesh $\mathcal T_h$ is obtained by hierarchically refining the initial tensor product mesh $\mathcal T_0$ for $J$ times. Starting with the initial tensor product mesh $\mathcal T_0$, the construction employs standard bicubic $C^1$ tensor product B-splines as the basis functions.

Consider a hierarchical T-mesh at level $k$, $0\leq k\leq J-1$, denote as $\mathcal T_k$, where the basis functions $\{\phi_j^k\}$, $j=1,2,\cdots,d_k$, have already been established. The construction of basis functions at level $\mathcal T_{k+1}$ consists of two key operations:
(i) truncation of the existing basis functions $\{\phi_j^k\}$ defined on $\mathcal T_k$, followed by
(ii) generation of new bicubic $C^1$-continuous B-spline basis functions corresponding to the new basis vertices in $\mathcal T_{k+1}$.

During the truncation step, let $\theta_i^k$ ($i = 1, 2,\cdots,C_k$) denote all level $k$ elements undergoing subdivision. For every basis function
$\phi_j^k$ ($j\in\{1,2,\cdots,d_k\}$) that maintains non-zero values within any $\theta_i^k$ element, we express it using Bézier representations across these elements at the refined level $k+1$. Importantly, while $\phi_j^k$ retains its original functional form, its definition now extends over the updated mesh $\mathcal T_{k+1}$.

The mesh refinement from $\mathcal T_k$ to $\mathcal T_{k+1}$ introduces additional basis vertices. Within each level $k + 1$ element, the 16 Bézier ordinates are systematically partitioned into four groups, each corresponding to a specific element vertex. This enables the modification of all level $k$ basis functions $\{\phi_j^k\}_{j=1}^{d_k}$ to $\{\widetilde \phi_j^k\}_{j=1}^{d_k}$ through the following procedure: for every basis function, all Bézier ordinates associated with newly introduced vertices are set to zero. It is easy to check that $\{\widetilde \phi_k^j\}_{j=1}^{d_k}$ contain in the spline space $\mathcal S(3,3,1,1,\mathcal T_{k+1})$.

The second construction step generates four standard tensor-product B-spline basis functions for each newly created vertex at level $k+1$. The resulting basis system maintains several crucial properties: non-negativity, linear independence, partition of unity, and compact local support.

\subsection{Interpolation operator for PHT-splines}
To facilitate the construction of the PPR operator in the following section, we first recall the definition of the interpolation operator $I_h$ associated with the spline space $V_h$. This operator relies on the geometric information of the target function evaluated at the basis vertices.

To this end, for any sufficiently smooth function $f(\bm x)$, we introduce a geometric information operator $\mathcal G$ defined by
\[\mathcal Gf(\bm x)=\left(f(\bm x),\,\, \frac{\partial f(\bm x)}{\partial x},\,\,
\frac{\partial f(\bm x)}{\partial y},\,\,\frac{\partial^2f(\bm x)}{\partial x\partial y}\right)^T.\]
By construction, $\mathcal G$ is a linear operator, and it satisfies the following key property: for any basis function $\phi$ in $V_h$, associated with a basis vertex $z_0$, the evaluation $\mathcal G\phi(z_i)$ vanishes at all other basis vertices $z_i \ne z_0$.

With the aid of the operator $\mathcal G$, we now define the interpolation operator $I_h$ for PHT-splines. Given a sufficiently smooth function $f$, we assume the interpolant takes the form
\begin{equation}\label{eq:ip}
I_hf(\bm x)=\sum_{j=1}^{|N_h|}\bm c_j^T\bm\phi_j(\bm x),
\end{equation}
where the coefficient vectors $\bm c_j \in \mathbb R^4$ are to be determined. For each fixed basis vertex $z_j$, associated with the basis function vector $\bm\phi_j$, we apply the operator $\mathcal G$ to both sides of equation \eqref{eq:ip} and evaluate at $z_j$, yielding
\[\mathcal Gf(z_j)=\mathbf B_j \bm c_j,\]
where $\mathbf B_j=\mathcal G\bm\phi_j^T(z_j)$ is a $4\times4$ matrix.
Suppose the control points for the four basis functions associated with a basis vertex are chosen in the following order:

\begin{tabular}{l}
$(x_{i-1}, x_{i-1}, x_i, x_i, x_{i+1})\times(y_{j-1}, y_{j-1}, y_j, y_j, y_{j+1})$, \\
$(x_{i-1}, x_i, x_i, x_{i+1}, x_{i+1})\times(y_{j-1}, y_{j-1}, y_j, y_j, y_{j+1})$, \\
$(x_{i-1}, x_i, x_i, x_{i+1}, x_{i+1})\times(y_{j-1}, y_j, y_j, y_{j+1}, y_{j+1})$, \\
$(x_{i-1}, x_{i-1}, x_i, x_i, x_{i+1})\times(y_{j-1}, y_j, y_j, y_{j+1}, y_{j+1})$.
\end{tabular}

Then, following the formulation in \cite{Tian2011}, the matrix $\mathbf B_j$ takes the form:
\[\mathbf B_j =
\begin{pmatrix}
(1 - \lambda)(1 - \mu) & \lambda(1 - \mu) & \lambda\mu & (1 - \lambda)\mu \\
-\alpha(1 - \mu) & \alpha(1 - \mu) & \alpha\mu & -\alpha\mu \\
-\beta(1 - \lambda) & -\beta\lambda & \beta\lambda & \beta(1 - \lambda) \\
\alpha\beta & -\alpha\beta & \alpha\beta & -\alpha\beta
\end{pmatrix},
\]
with
\[\alpha = \frac{1}{\Delta u_1 + \Delta u_2}, \quad
\beta = \frac{1}{\Delta v_1 + \Delta v_2}, \quad
\lambda = \alpha\Delta u_1, \quad
\mu = \beta\Delta v_1,\]
and the four neighbor elements around $z_0$ are with the sizes
$3\Delta u_1\times 3\Delta v_1,3\Delta u_2\times 3\Delta v_1,3\Delta u_1\times 3\Delta v_2,3\Delta u_2\times 3\Delta v_2$.
It follows that $\mathbf B_j$ is invertible, allowing the interpolation coefficients to be computed as $\bm c_j = \mathbf B_j^{-1} \mathcal Gf(z_j)$.

\section{Superconvergent  Gradient Recovery Method}\label{sec:recovery}
In this section, we present a high-accuracy and computationally efficient post-processing strategy tailored for the PHT-spline framework. Specifically, we introduce the polynomial preserving recovery (PPR) operator, denoted by $G_h$, which acts as a linear mapping from $V_h$ to $V_h \times V_h$.
Given a discrete function $u_h \in V_h$, a key observation is that the recovered gradient $G_h u_h$ also lies within the PHT-spline space. As a result, the construction of $G_hu_h$ reduces to the evaluation of certain geometric quantities at the basis vertices. This localization feature significantly simplifies the implementation and maintains the structure of the underlying spline space.

\subsection{Description of the algorithm}
Inspired by the polynomial preserving recovery (PPR) algorithms developed for standard finite elements in \cite{NZ2004,ZN2005,NZ2005}, our PPR scheme for PHT-splines is similarly organized into three main steps: (i) construction of local element patches; (ii) execution of local recovery procedures; and (iii) assembly of the recovered values into a global representation.

The construction of local element patches begins with the definition of basis vertex adjacent mesh layers. Let $z_i \in \mathcal N_h$ be a basis vertex. For any positive integer $n$, we define the $n$-th layer $\mathcal L(z_i,n)$ recursively as
\[\mathcal L(z_i,n):=
\begin{cases}
  z_i, & \mbox{if } n=0  \\
  \bigcup\limits_{\substack{T\in\mathcal T_h; \\ T\cap\mathcal L(z_i,n-1)\neq\emptyset}}
  T, & \mbox{if } n\geq 1.
\end{cases}\]
The set $\mathcal L(z_i,n)$ represents the union of elements forming an $n$-ring patch around the vertex $z_i$, i.e., all elements that are within $n$ adjacency layers of $z_i$.

For any basis vertex $z_i$, we construct the set $B_{z_i}$ of sampling points in its neighborhood. In contrast to the SPR method developed in previous works, which relies on the existence of gradient superconvergence points of the spline function, our approach avoids such limitations. A natural theoretical choice for defining $B_{z_i}^n$ is to consider the set of all basis vertices contained in $\mathcal L(z_i,n)$.

Let $n_i$ be the smallest integer such that the basis vertices in $B_{z_i}^n$ suffice to uniquely determine a quartic polynomial in the least-squares sense. We then define $B_{z_i} := B_{z_i}^{n_i}$ and the associated local patch $\Omega_{z_i} := \mathcal L(z_i,n_i)$. To construct the recovered gradient at a given basis vertex $z_i$, let $I_i$ denote the index set corresponding to the sampling points in $B_{z_i}$. A quartic polynomial $p_{z_i}$ centered at $z_i$ is then computed via the following least-squares fitting procedure:
 \[p_{z_i} = \mathop{\text{arg min}}\limits_{p\in\mathbb P_4(\Omega_{z_i})}\sum_{j\in I_i}|p(z_{i_j}) - u_h(z_{i_j})|^2,\]
where $z_{i_j} \in B_{z_i}$ are the sampling points.

\begin{remark}\label{rmk:sp}
We emphasize that the above construction of the sampling point set $B_{z_i}$ is primarily designed for theoretical analysis. In practical computations, ensuring a sufficient number of sampling points under this strategy typically requires a relatively large value of $n_i$, which may be computationally inefficient. To address this, one may enrich the set $B_{z_i}$ by including additional sampling points within each element of $\mathcal L(z_i,n)$, such as element centers, edge midpoints, or Gauss integration points.
\end{remark}

To avoid numerical instability in computations, let
\[h_i=\max\{|z_{i_m}-z_{i_\ell}|: i_m, i_\ell\in I_i\},\]
and define the local coordinate transform
\[F:(x,y)\rightarrow(\xi,\eta)=\frac{(x,y)-(x_i,y_i)}{h_i},\]
where $z=(x,y)$ and $z_i=(x_i,y_i)$ and $\hat z=(\xi,\eta)$. All the computations are performed at the reference
local element patch $\hat\Omega_{z_i}=F(\Omega_{z_i})$. Then, we can rewrite $p_{z_i}$ as
\[p_{z_i}=\mathbf p^T\mathbf a=\hat{\mathbf p}^T\hat{\mathbf a},\]
where
\begin{align*}
&\mathbf p^T=(1, x, y, x^2, xy, y^2, \cdots, xy^3, y^4) &\mathbf a=(a_1, a_2, a_3, a_4, \cdots,  a_{15})^T\\
&\hat{\mathbf p}^T=(1, \xi, \eta, \xi^2, \xi\eta, \eta^2, \cdots, \xi\eta^3, \eta^4)
&\hat{\mathbf a}=(a_1, h_ia_2, h_ia_3, h_i^2a_4, \cdots,  h_i^4a_{15})^T.
\end{align*}
Let $\hat z_{i_j}=F(z_{i_j})$. The coefficient vector $\hat{\mathbf a}$ is determined by solving the linear system
\[(\hat {\mathbf A}^T\hat {\mathbf A})\hat{\mathbf a}=\hat {\mathbf A}^T\hat{\mathbf b},\]
where
\begin{equation*}
	\hat{\mathbf A}
	= \begin{pmatrix}
		1 & \xi_{i_0} & \eta_{i_0}
		& \cdots & \xi_{i_0}^4& \cdots & \eta_{i_0}^4 \\
		1 & \xi_{i_1} & \eta_{i_1}
		& \cdots & \xi_{i_1}^4& \cdots & \eta_{i_1}^4 \\
		\vdots & \vdots  & \vdots &  \ddots & \vdots & \ddots & \vdots \\
		1 & \xi_{i_{|I_i|-1}} & \eta_{i_{|I_i|-1}}
		& \cdots & \xi_{i_{|I_i|-1}}^4& \cdots & \eta_{i_{|I_i|-1}}^4
	\end{pmatrix}
	\text{ and }
	\hat{\mathbf{b}} =
	\begin{pmatrix}
		u_h(z_{i_0})\\
		u_h(z_{i_1})\\
		\vdots \\
		u_h(z_{i_{|I_i|-1}})
	\end{pmatrix},
\end{equation*}
with $|I_i|$ being the cardinality of the set $I_i$.

To achieve the recovery gradient $G_hu_h$ at the basis vertex $z_i$, we first compute the geometric information of $\nabla p_{z_i}$ at $z_i$, that is
\[\mathcal G p_{z_i}^x(z_i):=\mathcal G\frac{\partial p_{z_i}}{\partial x}(z_i)=
        \begin{pmatrix}
          \hat a_2\\ \hat a_4 \\ \hat a_5 \\\hat a_8
        \end{pmatrix}\circ\bm l_1,\quad \mathcal G p_{z_i}^y(z_i):=\mathcal G\frac{\partial p_{z_i}}{\partial y}(z_i)=
        \begin{pmatrix}
          \hat a_3\\ \hat a_5 \\ \hat a_6 \\\hat a_9
        \end{pmatrix}\circ\bm l_2,\]
with \[\bm l_1 = \left(\frac1{h_{z_i}},\frac2{h_{z_i}^2},\frac1{h_{z_i}^2},\frac2{h_{z_i}^3}\right)^T,
\quad\bm l_2 = \left(\frac1{h_i},\frac1{h_i^2},\frac2{h_i^2},\frac2{h_i^3}\right)^T,\]
and $\circ$ represents the Hadamard product. Furthermore, the
interpolation coefficients at $z_i$ are determined by
\[\bm c_{z_i}^x = \mathbf B_i^{-1}\mathcal G p_{z_i}^x(z_i),\quad \bm c_{z_i}^y = \mathbf B_i^{-1}\mathcal G p_{z_i}^y(z_i)\]
Finally, the global recovered gradient can be assembled as
\begin{equation}\label{eq:Gh}
G_hu_h:=\left(\sum_{z_i}\bm c_{z_i}^x\cdot \bm \phi_{z_i},\,\, \sum_{z_i}\bm c_{z_i}^y\cdot \bm \phi_{z_i}\right)^T.
\end{equation}
The recovery procedure is summarized in Algorithm \ref{alg:ppr}.

%The local element patch $\Omega_{z_i}^n$ associated with $z_i$ is

%
%
\begin{algorithm}
    \caption{Superconvergent gradient recovery procedure}
    \label{alg:ppr}
    %\begin{algorithmic}[1]
    Given a T-mesh $\mathcal T_h$ and the PHT-spline solution $u_h$, repeat the following steps for all basis vertices $z_i \in \mathcal N_h$.\\
        (1)\,\, For every basis vertex $z_i$, construct a local patch of elements $\Omega_{z_i}$.
        Let $B_{z_i}$ be the set of basis vertices in $\Omega_{z_i}$ and $I_i$ be the indexes of the $B_{z_i}$.\\
        (2)\,\,Construct the reference local patch $\hat\Omega_{z_i}$ and the corresponding reference vertex set $\hat B_{z_i}$. \\
        (3)\,\,Find a polynomial $\hat p_{z_i}$ over $\hat\Omega_{z_i}$ by solving the least-squares problem:
        \[\hat p_{z_i} = \mathop{\text{arg min}}\limits_{\hat p}\sum_{j\in I_i}|\hat p(\hat z_{i_j}) - u_h(z_{i_j})|^2
        \quad\text{ for } \hat p \in\mathbb P_4(\hat\Omega_{z_i}).\]
        (4)\,\,Calculate the gradient of the approximated polynomial function
        \[\begin{pmatrix}p_{z_i}^x\\p_{z_i}^y\end{pmatrix}:=
        \nabla p_{z_i} =  \frac1{h_i}\hat\nabla\hat p_{z_i}\]
        (5)\,\, Calculate the geometric information of  $p_{z_i}^x$ and $p_{z_i}^y$
        \[\mathcal G p_{z_i}^x(z_i)=\mathcal G\frac{\partial p_{z_i}}{\partial x}(z_i)=
        \hat{\mathcal G}\frac{\hat \partial \hat p_{z_i}}{\hat \partial \hat x}(0,0)\circ\bm l_1,\quad
         \mathcal G p_{z_i}^y(z_i)=\mathcal G\frac{\partial p_{z_i}}{\partial y}(z_i)=
        \hat{\mathcal G}\frac{\hat \partial \hat p_{z_i}}{\hat \partial \hat y}(0,0)\circ\bm l_2\]
        (6)\,\, Calculate the interpolation coefficients
        \[\bm c_{z_i}^x = \mathbf B_i^{-1}\mathcal G p_{z_i}^x(z_i),\quad \bm c_{z_i}^y = \mathbf B_i^{-1}\mathcal G p_{z_i}^y(z_i)\]
        (7) Then, we have the recovered gradient at each vertex $z_i$
        \[G_hu_h:=\left(\sum_{z_i}\bm c_{z_i}^x\cdot \bm \phi_{z_i},\,\,\sum_{z_i}\bm c_{z_i}^y\cdot \bm \phi_{z_i}\right)^T\]
    %\end{algorithmic}
\end{algorithm}

\subsection{Properties of the PPR}\label{subsec:properties}
In this section, we analyze several crucial properties of our constructed PPR operator $G_h$ in \eqref{eq:Gh}.
From the construction of $G_h$, it is straightforward to observe that $G_h$ is a linear operator.
We begin by stating the following polynomial preservation property.
\begin{lemma}\label{lem:pp}
The recovery operator $G_h$ preserves quartic polynomial in the following sense:
for any basis vertex $z_i$, if $u$ is a quartic polynomial on the local patch $\Omega_{z_i}$, then we have $G_h(I_hu)=\nabla u$ on $\Omega_{z_i}$.
\end{lemma}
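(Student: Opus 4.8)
The plan is to follow the three stages of Algorithm \ref{alg:ppr} and to check that, when the input is $I_hu$ with $u$ a quartic polynomial, each stage reproduces the corresponding data of $\nabla u$; the chain then closes with the observation that $\nabla u$ is a cubic vector field, which lies in $V_h$ and is therefore fixed by the spline interpolation.

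First I would record the reproduction properties of $I_h$ that are already implicit in Section \ref{sec:notations}. Since every $\mathbf B_j$ is invertible and $\mathcal G\bm\phi_j$ vanishes at all basis vertices except the one it is attached to, applying $\mathcal G$ to \eqref{eq:ip} and evaluating at a basis vertex $z_k$ collapses the sum to its $k$-th term and gives $\mathcal G(I_hf)(z_k)=\mathbf B_k\bm c_k=\mathcal Gf(z_k)$ for every sufficiently smooth $f$ and every $z_k\in\mathcal N_h$. Two consequences follow: $(I_hf)(z_k)=f(z_k)$ at all basis vertices (the first component of $\mathcal G$), and $I_h$ restricted to $V_h$ is the identity (for $g=\sum_j\bm d_j^T\bm\phi_j\in V_h$ the same identity forces $\bm d_k=\mathbf B_k^{-1}\mathcal Gg(z_k)$, i.e.\ $\bm d_k$ is exactly the $I_h$-coefficient). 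In particular $(I_hu)(z_k)=u(z_k)$ for all $z_k$.

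Next I would run the local least-squares step on $u_h:=I_hu$. At a basis vertex $z_i$ the sampled data are $u_h(z_{i_j})=u(z_{i_j})$ for $j\in I_i$, and, after the affine rescaling $F$ (which maps $\mathbb P_4$ onto $\mathbb P_4$), the polynomial $u$ realises residual zero in the least-squares functional of step (3). By the very definition of $n_i$ the normal matrix $\hat{\mathbf A}^T\hat{\mathbf A}$ is nonsingular, so this minimiser is unique; hence $\hat p_{z_i}=u\circ F^{-1}$, i.e.\ $p_{z_i}=u$ on $\Omega_{z_i}$, and therefore $\nabla p_{z_i}=\nabla u$ there. Consequently $\mathcal Gp_{z_i}^x(z_i)=\mathcal G(\partial_xu)(z_i)$ and $\mathcal Gp_{z_i}^y(z_i)=\mathcal G(\partial_yu)(z_i)$ --- this is where I would double-check that the Hadamard bookkeeping with $\bm l_1,\bm l_2$ in step (5) correctly converts the reference-frame coefficients $\hat a_k$ back into the physical derivatives --- so the blocks produced in step (6), $\bm c_{z_i}^x=\mathbf B_i^{-1}\mathcal G(\partial_xu)(z_i)$ and $\bm c_{z_i}^y=\mathbf B_i^{-1}\mathcal G(\partial_yu)(z_i)$, are exactly the PHT interpolation coefficients of $\partial_xu$ and $\partial_yu$ at $z_i$. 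The same reasoning applies at every basis vertex whose sampling set $B_{z_j}$ lies inside the region on which $u$ is polynomial.

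Comparing \eqref{eq:Gh} with \eqref{eq:ip} term by term, the two components of $G_h(I_hu)$ are then precisely $I_h(\partial_xu)$ and $I_h(\partial_yu)$. Since $u$ has total degree $\le 4$, its partials are cubic polynomials, hence globally $C^1$ and piecewise of bidegree $(3,3)$, so they belong to $V_h$; by the first step $I_h$ leaves them unchanged, and thus $G_h(I_hu)=(\partial_xu,\partial_yu)^T=\nabla u$ on $\Omega_{z_i}$. The only point that needs a little care is the passage from a local hypothesis to an identity valid on $\Omega_{z_i}$: by the compact support of the $\bm\phi_{z_j}$ the value of $G_h(I_hu)$ on $\Omega_{z_i}$ depends only on finitely many vertex blocks attached to a bounded neighbourhood of $\Omega_{z_i}$, and for those blocks one must invoke the least-squares uniqueness argument, so the statement should be read with $u$ polynomial on that slightly enlarged neighbourhood (equivalently, with $u$ a global quartic, for which the identity holds on all of $\Omega$). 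I do not expect a substantive obstacle; the proof is a consistency chain whose two load-bearing ingredients are the $\mathcal G$-annihilation property of the basis and the nonsingularity of the normal matrix built into the choice of $n_i$.
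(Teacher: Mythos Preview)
Your proposal is correct and follows essentially the same chain as the paper's proof: $I_hu$ agrees with $u$ at the basis vertices, so the least-squares fit reproduces $u$ exactly, and then $G_h(I_hu)=I_h(\nabla u)=\nabla u$ because $\nabla u$ is cubic and $I_h$ fixes $V_h$. Your version is considerably more explicit than the paper's three-line argument and even flags the locality caveat (that one really needs $u$ polynomial on a neighbourhood of $\Omega_{z_i}$ covering the supports of all contributing basis functions), which the paper's statement and proof leave implicit.
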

\begin{proof}
For any $u\in\mathbb P_4(\Omega_{z_i})$, we observe that $I_hu(z_{i_j})=u(z_{i_j})$, for all sampling points $z_{i_j} \in B_{z_i}$.
Therefore the least-squares fitting procedure exactly reproduces $u$. Then the desired result follows from the fact that $I_h$ is spline-preserving.
\end{proof}

Building upon the polynomial preservation property established above, we now proceed to analyze the boundedness of the operator $G_h$ and present the following lemma.

\begin{lemma}\label{lem:bd1}
For any basis vertex $z_i$, and for any function $u_h\in V_h$, we have
\begin{equation}\label{ineq:bd1}
 h|G_hu_h(z_i)|+ h^2\left|\frac{\partial G_hu_h}{\partial x}(z_i)\right|+h^2\left|\frac{\partial G_hu_h}{\partial y}(z_i)\right|+
 h^3\left|\frac{\partial^2 G_hu_h}{\partial x\partial y}(z_i)\right|
 \leq C|u_h|_{1,\Omega_{z_i}}.
\end{equation}
\end{lemma}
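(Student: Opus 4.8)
The plan is to reduce the bound on the spline quantities $\mathcal G G_h u_h(z_i)$ to a bound on the least-squares polynomial $p_{z_i}$, and then bound that polynomial by the energy seminorm of $u_h$ on the patch. Concretely, by the definition of $G_h u_h$ in \eqref{eq:Gh} and the interpolation identity $\mathcal G(I_h f)(z_i) = \mathbf B_i \bm c_i$, applied here with $f$ replaced by the components of $\nabla p_{z_i}$, we have $\mathcal G\bigl(\tfrac{\partial}{\partial x}G_h u_h\bigr)(z_i)\cdot(\text{scalings}) = \mathcal G p_{z_i}^x(z_i)$ and similarly for $y$; so the left-hand side of \eqref{ineq:bd1} is, up to the mesh-size weights already built into $\bm l_1,\bm l_2$, exactly a weighted sum of $|\hat a_k|$ for $k\in\{2,\dots,9\}$, i.e. controlled by $\|\hat{\mathbf a}\|$. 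Thus the first step is: the left side of \eqref{ineq:bd1} is $\le C h\,\|\hat{\mathbf a}\|$ where $\hat{\mathbf a}=(\hat{\mathbf A}^T\hat{\mathbf A})^{-1}\hat{\mathbf A}^T\hat{\mathbf b}$ and $\hat{\mathbf b}$ collects the nodal values $u_h(z_{i_j})$. Here I would use the shape-regularity of the hierarchical mesh and the bounded-level assumption to see $h_i \simeq h$ on the patch, so the $h_i$-powers in $\bm l_1,\bm l_2$ and the $h$-powers in \eqref{ineq:bd1} match.

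The second step is a stability estimate for the least-squares projection: $\|\hat{\mathbf a}\| \le C\,\|\hat{\mathbf b}\|_\infty$. This follows because the reference patch $\hat\Omega_{z_i}$ lies in a fixed bounded region (the coordinate map $F$ normalizes the patch diameter to $O(1)$ from below and above) and the configuration of sampling points, while not literally fixed, ranges over a compact family for which $\hat{\mathbf A}^T\hat{\mathbf A}$ is uniformly invertible — this is exactly the role of the choice of $n_i$ (enough points to determine a quartic in the least-squares sense) and the finitely-many mesh patterns available under bounded refinement level. So $\|(\hat{\mathbf A}^T\hat{\mathbf A})^{-1}\hat{\mathbf A}^T\|$ is bounded by a constant independent of $i$ and $h$, giving $\|\hat{\mathbf a}\| \le C\max_{j\in I_i}|u_h(z_{i_j})|$.

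The third step converts the pointwise norm of $u_h$ into the $H^1$ seminorm. Since $G_h$ annihilates constants (if $u_h\equiv c$ then $p_{z_i}\equiv c$ and $\nabla p_{z_i}=0$), we may replace $u_h$ by $u_h - \bar u_h$ where $\bar u_h$ is the mean of $u_h$ over $\Omega_{z_i}$; this does not change $G_h u_h(z_i)$. Then $\max_{j}|u_h(z_{i_j}) - \bar u_h| \le \|u_h - \bar u_h\|_{L^\infty(\Omega_{z_i})} \le C h^{-1}\|u_h - \bar u_h\|_{L^2(\Omega_{z_i})} \le C\,|u_h|_{1,\Omega_{z_i}}$, using an inverse estimate on the finite-dimensional space $V_h|_{\Omega_{z_i}}$ (valid with an $h$-independent constant by scaling to the reference patch) followed by the Poincaré inequality on $\Omega_{z_i}$. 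Chaining the three steps gives $h|G_h u_h(z_i)| + \cdots + h^3|\partial^2_{xy}G_h u_h(z_i)| \le C h\cdot C\cdot C h^{-1}|u_h|_{1,\Omega_{z_i}} = C|u_h|_{1,\Omega_{z_i}}$.

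The main obstacle is the uniform conditioning in the second step: one must know that $\hat{\mathbf A}^T\hat{\mathbf A}$ is invertible with an inverse bounded independently of the vertex $z_i$ and the mesh size $h$. This requires (a) that $B_{z_i}$ always contains enough sampling points in sufficiently general position to uniquely determine a quartic least-squares fit — which is the defining property of $n_i$ — and (b) a compactness argument over the possible local mesh geometries, which is why the bounded-refinement-level (and hence finitely-many local patterns, up to similarity) hypothesis is essential; without it the constant could degenerate as patches become arbitrarily anisotropic or the reference point clouds approach a degenerate configuration. The inverse estimate and Poincaré constant in step three are likewise uniform only because the normalized patches $\hat\Omega_{z_i}$ range over a compact family, and I would state this shape-regularity/bounded-level assumption explicitly if it is not already in force.
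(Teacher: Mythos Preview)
Your argument is correct and slightly more streamlined than the paper's. Both proofs share the same skeleton: represent the recovered derivatives at $z_i$ as bounded linear functionals of the nodal values $u_h(z_{i_j})$ (with the right $h$-scaling), exploit the fact that $G_h$ annihilates constants to subtract a constant from $u_h$, and finish with Poincar\'e on the patch. The difference lies in the middle step. The paper subtracts the specific constant $u_h(z_i)$, then bounds each difference $u_h(z_{i_\ell})-u_h(z_i)$ by chaining along a sequence of mesh edges from $z_i$ to $z_{i_\ell}$ and applying a trace-plus-inverse inequality on each edge; this first yields $|G_hu_h(z_i)|\le C(h_i^{-2}\|u_h\|_{0,\Omega_{z_i}}+h_i^{-1}|u_h|_{1,\Omega_{z_i}})$, after which a second subtraction (of the mean) and Poincar\'e remove the $L^2$ term. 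You instead subtract the mean from the outset and use a single patch-wide inverse estimate $\|u_h-\bar u_h\|_{L^\infty(\Omega_{z_i})}\le Ch^{-1}\|u_h-\bar u_h\|_{L^2(\Omega_{z_i})}$ followed by Poincar\'e, which is cleaner and avoids the edge-chain construction. Your explicit discussion of the uniform conditioning of $\hat{\mathbf A}^T\hat{\mathbf A}$ is a point the paper handles only implicitly (it simply asserts that the coefficients $c_\ell^x,c_\ell^y$ are mesh-size independent), so your version is in that respect more careful. One minor slip: in your final chaining you write ``$\le Ch\cdot C\cdot Ch^{-1}|u_h|_{1,\Omega_{z_i}}$'', but in fact the $h$-weights on the left already cancel exactly against the $h_i^{-k}$ in $\bm l_1,\bm l_2$, so step one gives $\le C\|\hat{\mathbf a}\|$ (no $h$), and step three gives $\max_j|u_h(z_{i_j})-\bar u_h|\le C|u_h|_{1,\Omega_{z_i}}$ (no $h^{-1}$); the product is still $C|u_h|_{1,\Omega_{z_i}}$, so the conclusion is unaffected.
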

\begin{proof}
We first consider $G_hu_h(z_i)$. Based on the construction of $G_h$, the recovered gradient $G_hu_h$ at the point $z_i$ admits the representation:
\begin{equation}\label{eq:bd1-1}
  G_hu_h(z_i)=(
                G_h^xu_h(z_i),\,\,
                G_h^yu_h(z_i)
              )^T
              =\frac1{h_i}\left(
                \sum_{\ell=1}^{|I_i|}c_\ell^xu_h(z_{i_\ell}),\,\,
               \sum_{\ell=1}^{|I_i|}c_\ell^yu_h(z_{i_\ell})
              \right)^T,
\end{equation}
where $c_\ell^x$ and $c_\ell^y$  are independent of the mesh size. For constant function $\zeta\equiv u_h(z_i)$, according to
Lemma \ref{lem:pp}, we claim that
\begin{equation}\label{eq:bd1-2}
  G_h\zeta= (
                0,\,\,0
              )^T.
\end{equation}
Consequently, we deduce from \eqref{eq:bd1-1} and \eqref{eq:bd1-2} that
\begin{equation}\label{eq:bd1-3}
  G_hu_h(z_i)  =\frac1{h_i}\left(
                \sum_{\ell=1}^{|I_i|}c_\ell^x(u_h(z_{i_\ell})-u_h(z_i)),\,\,
               \sum_{\ell=1}^{|I_i|}c_\ell^y(u_h(z_{i_\ell})-u_h(z_i))
              \right)^T.
\end{equation}
For any $z_{i_\ell}$, we can find a sequence of vertices $z_i = z_{\ell_1}, z_{\ell_2}, \cdots , z_{\ell_{n_\ell}} = z_{i_\ell}$ such that the line segment $\overline{z_{\ell_j}z_{\ell_{j+1}}}=:s_{\ell_j}$ is an edge contained in an element $K\subset\Omega_{z_i}$. Then $u_h|_{s_{\ell_j}}$ is a polynomial, and it holds that
\begin{equation}\label{eq:bd1-4}
\frac{u_h(z_{i_\ell})-u_h(z_i)}{|s_{\ell_j}|}\leq |\nabla u_h|_{0,\infty,s_{\ell_j}}.
\end{equation}
By applying the inverse inequality and the trace inequality, we obtain
\begin{equation}\label{eq:bd1-5}
|\nabla u_h|_{0,\infty,s_{\ell_j}}\leq C|s_{\ell_j}|^{-\frac32}\|u_h\|_{0,s_{\ell_j}}\leq C(|s_{\ell_j}|^{-2}\|u_h\|_{0,K}+|s_{\ell_j}|^{-1}|u_h|_{1,K}).
\end{equation}
Combining the above estimates and noticing that $|s_{\ell_j}|/h_i$ is bounded by a fixed constant, we arrive at
\begin{equation}\label{eq:bd1-6}
\begin{split}
|G_hu_h(z_i)|&\leq C\sum_{K\subset\Omega_{z_i}}(h_i^{-2}\|u_h\|_{0,K}+h_i^{-1}|u_h|_{1,K})\\
&\leq C(h_i^{-2}\|u_h\|_{0,\Omega_{z_i}}+h_i^{-1}|u_h|_{1,\Omega_{z_i}}).
\end{split}
\end{equation}
Set $\bar u_h=\frac1{|\Omega_{z_i}|}\int_{\Omega_{z_i}}u_h(y)\,\mathrm dy$. Then, using Lemma \ref{lem:pp} again, we have
\[G_hu_h(z_i)=G_h(u_h-\bar u_h)(z_i).\]
Therefore, we obtain from \eqref{eq:bd1-6} and the Poincar\'e inequality that
\begin{equation}\label{eq:bd1-7}
|G_hu_h(z_i)|\leq C(h_i^{-2}\|u_h-\bar u_h\|_{0,\Omega_{z_i}}+h_i^{-1}|u_h-\bar u_h|_{1,\Omega_{z_i}})\leq Ch_i^{-1}|u_h|_{1,\Omega_{z_i}}.
\end{equation}

Similarly, the remaining three terms in \eqref{ineq:bd1} can be estimated in the same manner. For example, to estimate $\left|\frac{\partial G_hu_h}{\partial x}(z_i)\right|$, we only need to modify the representation in \eqref{eq:bd1-1} accordingly as follows:
\begin{equation}\label{eq:bd1-8}
\frac{\partial G_hu_h}{\partial x}(z_i)=
\frac1{h_i^2}\left(
\sum_{\ell=1}^{|I_i|}\tilde c_\ell^xu_h(z_{i_\ell}),\,\,
\sum_{\ell=1}^{|I_i|}\tilde c_\ell^yu_h(z_{i_\ell})
\right)^T,
\end{equation}
where $\tilde c_\ell^x$ and $\tilde c_\ell^y$ are constants independent of the mesh size.
\end{proof}

Based on the above lemma, we can establish the boundedness of the recovery operator $G_h$ in the $L^2$ norm.
\begin{lemma}\label{lem:bd2}
For any element $K \in \mathcal T_h$ and any function $u_h \in V_h$, there holds
\begin{equation}\label{ineq:bd2}
\|G_hu_h\|_{0,K} \leq C |u_h|_{1,\Omega_K},
\end{equation}
where $\overline{\Omega_K} := \overline{\bigcup_{z \in K \cap \mathcal N_h} \Omega_z}$. Furthermore, we have
\begin{equation}\label{ineq:bd2a}
\|G_hu_h\|_{0,\Omega} \leq C |u_h|_{1,\Omega}.
\end{equation}
\end{lemma}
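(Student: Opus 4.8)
The plan is to reduce \eqref{ineq:bd2} to an elementwise estimate coming from the explicit representation of $G_hu_h$ in the PHT-spline basis together with Lemma~\ref{lem:bd1}, and then to obtain \eqref{ineq:bd2a} from \eqref{ineq:bd2} by a finite-overlap argument. Throughout we use the same local mesh-regularity assumptions already implicit in Lemma~\ref{lem:bd1} (adjacent elements of $\mathcal T_h$ have comparable sizes).

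Fix $K\in\mathcal T_h$. Since $G_hu_h\in V_h$ and the PHT-spline basis has compact local support, only finitely many (uniformly bounded in number) of the vector-valued functions $\bm\phi_z$ are nonzero on $K$, their basis vertices $z$ lie within $O(h_K)$ of $K$, and the associated patches $\Omega_z$ are contained in $\Omega_K$ (after enlarging $\Omega_K$ by a fixed number of layers if necessary, which does not affect the overlap count used below). Hence on $K$ we may write $G_h^xu_h=\sum_z\bm c_z^x\cdot\bm\phi_z$ with $\bm c_z^x=\mathbf B_z^{-1}\mathcal Gp_z^x(z)$, and similarly for $G_h^yu_h$. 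First I would bound the coefficient vectors. Writing $\mathbf B_z=\diag(1,\alpha,\beta,\alpha\beta)\,\widetilde{\mathbf B}_z$, the matrix $\widetilde{\mathbf B}_z$ has bounded entries and is uniformly invertible (this is where the local regularity enters: the parameters $\lambda,\mu$ stay bounded away from $0$ and $1$), so that $\mathbf B_z^{-1}=\widetilde{\mathbf B}_z^{-1}\diag(1,\alpha^{-1},\beta^{-1},(\alpha\beta)^{-1})$ with $\alpha,\beta\sim h^{-1}$. The key observation is that by construction $\mathcal Gp_z^x(z)=\mathcal G(G_h^xu_h)(z)$, whose four components are exactly the quantities weighted by $h,h^2,h^2,h^3$ on the left-hand side of \eqref{ineq:bd1}; therefore Lemma~\ref{lem:bd1} implies that every component of $\diag(1,\alpha^{-1},\beta^{-1},(\alpha\beta)^{-1})\mathcal Gp_z^x(z)$ is bounded by $Ch^{-1}|u_h|_{1,\Omega_z}$, whence $|\bm c_z^x|\le Ch^{-1}|u_h|_{1,\Omega_z}$, and likewise $|\bm c_z^y|\le Ch^{-1}|u_h|_{1,\Omega_z}$.

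It then remains to collect the pieces. From non-negativity and partition of unity of the PHT basis, $0\le\phi_z^k\le 1$, so $\|\bm\phi_z\|_{0,K}\le|K|^{1/2}\le Ch_K$. Combining this with the coefficient bound, summing over the boundedly many contributing $z$, and using that $h_K$ is comparable to the local mesh size gives
\[
\|G_hu_h\|_{0,K}\le\sum_z\bigl(|\bm c_z^x|+|\bm c_z^y|\bigr)\,\|\bm\phi_z\|_{0,K}\le C\sum_z|u_h|_{1,\Omega_z}\le C|u_h|_{1,\Omega_K},
\]
which is \eqref{ineq:bd2}. Squaring and summing over $K\in\mathcal T_h$ yields $\|G_hu_h\|_{0,\Omega}^2=\sum_K\|G_hu_h\|_{0,K}^2\le C\sum_K|u_h|_{1,\Omega_K}^2$; since each element of $\mathcal T_h$ lies in only a uniformly bounded number of the patches $\Omega_K$ — a consequence of the uniform bounds on the layer counts $n_i$ and on vertex valences — the last sum is $\le C|u_h|_{1,\Omega}^2$, giving \eqref{ineq:bd2a}.

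I expect the main obstacle to be the coefficient estimate $|\bm c_z^x|\le Ch^{-1}|u_h|_{1,\Omega_z}$, specifically the uniform invertibility of the rescaled matrix $\widetilde{\mathbf B}_z$: this requires making precise a local mesh-regularity hypothesis (comparable sizes of adjacent elements, equivalently $\lambda,\mu$ bounded away from $0$ and $1$, so that $\det\widetilde{\mathbf B}_z$ is bounded away from $0$). The other ingredients — identifying the basis functions supported on $K$ and locating their patches inside $\Omega_K$, the scaling of $\|\bm\phi_z\|_{0,K}$, and the finite-overlap count — are routine bookkeeping.
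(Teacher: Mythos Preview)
Your proposal is correct and follows essentially the same route as the paper: both reduce \eqref{ineq:bd2} to Lemma~\ref{lem:bd1} via the local interpolation structure of $G_hu_h\in V_h$ on $K$, and then obtain \eqref{ineq:bd2a} by finite overlap. The paper is slightly terser---it asserts directly the norm inequality $\|G_hu_h\|_{0,K}\le C\sum_{z\in K\cap\mathcal N_h}\bigl(h\,|G_hu_h(z)|+h^2|\partial_xG_hu_h(z)|+h^2|\partial_yG_hu_h(z)|+h^3|\partial_{xy}G_hu_h(z)|\bigr)$ and then invokes Lemma~\ref{lem:bd1}---whereas you unpack the same step through the coefficient bound $|\bm c_z^x|\le Ch^{-1}|u_h|_{1,\Omega_z}$ and the scaling of $\mathbf B_z^{-1}$; your worry about the uniform invertibility of $\widetilde{\mathbf B}_z$ is exactly the mesh-regularity dependence hidden in the paper's constant.
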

\begin{proof}
  Let $I_K$ be the index set of $K\cap \mathcal N_h$, it is direct to check that
  \begin{equation}\label{ineq:bd2-1}
  \begin{split}
     \|G_hu_h\|_{0,K}&\leq C\sum_{\ell=1}^{|I_K|}\Bigg(|K|^{\frac12}G_hu_h(z_{i_\ell})+|K|^{\frac32}
     \left|\frac{\partial G_hu_h}{\partial x}(z_{i_\ell})\right|\\
     &\qquad\qquad+|K|^{\frac32}\left|\frac{\partial G_hu_h}{\partial y}(z_{i_\ell})\right|+|K|^{\frac52}
 \left|\frac{\partial^2 G_hu_h}{\partial x\partial y}(z_{i_\ell})\right|\Bigg)\\
 &\leq C\sum_{\ell=1}^{|I_K|}|u_h|_{1,\Omega_{z_{i_\ell}}}\leq C|u_h|_{1,\Omega_K}.
 \end{split}
  \end{equation}
  The estimate \eqref{ineq:bd2a} is a direct consequence of \eqref{ineq:bd2}, and the proof is completed.
\end{proof}

To conclude this section, we present the consistency result of $G_h$.
\begin{theorem}\label{thm:cons}
Suppose that $u \in H^5(\Omega_K)$, then the following estimate holds:
  \begin{equation}\label{ineq:cons}
    \|\nabla u-G_h(I_hu)\|_{0,K}\leq Ch^4\|u\|_{5,\Omega_K}.
  \end{equation}
\end{theorem}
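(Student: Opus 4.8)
\emph{Proof plan.} The plan is to replace $u$ by a degree-four polynomial approximant on the local patch, use the polynomial preservation property to cancel the leading term, and absorb the remainder through the $L^2$-boundedness of $G_h$; this mirrors the PPR consistency proofs in the finite element setting \cite{NZ2004,ZN2005}. Fix $K\in\mathcal T_h$. Throughout I let $\Omega_K$ denote the patch appearing in the statement, tacitly enlarged by a bounded number of mesh layers whenever an interpolation or recovery stencil forces it; on the hierarchical T-meshes under consideration each such patch is a union of a uniformly bounded number of mutually comparable elements, has diameter $\simeq h$, and is a finite union of domains star-shaped with respect to a ball, so that the Bramble--Hilbert lemma applies on it with $h$-independent constants. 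Let $p\in\mathbb P_4(\Omega_K)$ be the averaged Taylor polynomial of degree $4$ of $u$ on $\Omega_K$, so that $|u-p|_{t,\Omega_K}\le C h^{5-t}|u|_{5,\Omega_K}$ for $0\le t\le 5$.

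First I would dispose of the polynomial part. Because $p$ is a single global polynomial of total degree at most $4$, at each basis vertex the local least-squares fit of $I_hp$ returns $p$ exactly (the fit is exact on $\mathbb P_4$, and $I_hp$ agrees with $p$ at the sampling vertices), so the coefficients assembled by the algorithm are exactly the interpolation coefficients of $\partial_xp$ and $\partial_yp$, each of which lies in $\mathbb P_3\subset V_h$; since $I_h$ is spline-preserving this yields $G_h(I_hp)=\nabla p$ on $\Omega$, in particular on $K$ --- that is, Lemma~\ref{lem:pp} applied to $p$. By linearity of $I_h$ and $G_h$,
\[
\nabla u-G_h(I_hu)=\nabla(u-p)-G_h\big(I_h(u-p)\big)\quad\text{on }K,
\]
so that $\|\nabla u-G_h(I_hu)\|_{0,K}\le\|\nabla(u-p)\|_{0,K}+\|G_h(I_h(u-p))\|_{0,K}$.

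The first term I would estimate directly by Bramble--Hilbert: $\|\nabla(u-p)\|_{0,K}\le|u-p|_{1,\Omega_K}\le C h^4|u|_{5,\Omega_K}$. For the second term I would apply Lemma~\ref{lem:bd2} to $I_h(u-p)\in V_h$ to get $\|G_h(I_h(u-p))\|_{0,K}\le C|I_h(u-p)|_{1,\Omega_K}$, and then split $|I_h(u-p)|_{1,\Omega_K}\le|I_h(u-p)-(u-p)|_{1,\Omega_K}+|u-p|_{1,\Omega_K}$. The last term is once more $\le C h^4|u|_{5,\Omega_K}$; the interpolation-error term I would control with the standard $H^1$ estimate for the PHT Hermite-type interpolant, $|I_hw-w|_{1,\Omega_K}\le C h^3|w|_{4,\Omega_K}$ (legitimate since $w=u-p\in H^5$, so $\mathcal Gw$ is pointwise defined by Sobolev embedding), which combined with $|u-p|_{4,\Omega_K}\le C h|u|_{5,\Omega_K}$ gives $|I_h(u-p)-(u-p)|_{1,\Omega_K}\le C h^4|u|_{5,\Omega_K}$. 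Collecting the three bounds and using $|u|_{5,\Omega_K}\le\|u\|_{5,\Omega_K}$ yields \eqref{ineq:cons}.

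The only ingredient that is neither proved above in this paper nor entirely routine, and the step I expect to be the main obstacle, is the $H^1$ interpolation estimate $|I_hw-w|_{1,\Omega_K}\le C h^3|w|_{4,\Omega_K}$ for the PHT interpolation operator on hierarchical T-meshes. I would establish it by mapping each local mesh configuration affinely onto one of finitely many reference patches, using that $I_h$ reproduces $\mathbb P_{3,3}$ there, applying Bramble--Hilbert on the reference patch, and scaling back; the finiteness of reference configurations up to similarity is precisely where the admissibility/grading assumption on the mesh enters, and keeping all constants uniform in $h$ while tracking the successive enlargements of $\Omega_K$ is the bookkeeping that requires the most care (see \cite{Tian2011} for the construction underlying $I_h$). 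Everything else --- polynomial preservation (Lemma~\ref{lem:pp}), $L^2$-boundedness (Lemma~\ref{lem:bd2}), and the Bramble--Hilbert bounds for $u-p$ --- is already available.
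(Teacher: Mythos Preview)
Your proposal is correct and follows essentially the same route as the paper: the paper's proof is the one-liner ``polynomial preservation (Lemma~\ref{lem:pp}) plus Bramble--Hilbert,'' and you have simply unpacked that argument by inserting an averaged Taylor polynomial, invoking Lemma~\ref{lem:bd2} for the boundedness, and isolating the $H^1$-stability/approximation of $I_h$ as the ingredient that makes the Bramble--Hilbert application legitimate. The interpolation estimate you flag as the main obstacle is precisely what the paper's terse proof takes for granted, so your identification of it is appropriate rather than a deviation.
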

\begin{proof}
Recalling the polynomial preserving property of $G_h$ from Lemma \ref{lem:pp}, the
conclusion follows directly by applying the Bramble--Hilbert lemma.
\end{proof}

\subsection{PPR methods for the general physical domain}\label{sec:phydomain}
In this section, we extend our PPR method to general physical domains. Suppose the physical domain $\Omega$ is parameterized by a global geometry
function $F:(s,t)\in\hat\Omega=(0,1)^2\to(x,y)\in\Omega$, defined as
\[F(s,t)=\sum_{i=1}^m\mathbf P_i\frac{w_i\phi_i(s,t)}{\sum_{j=1}^mw_j\phi_j(s,t)},\quad (s,t)\in\hat\Omega,\]
where $\mathbf P_i\in\mathbb R^2$, $\phi_i(s,t)$ is the PHT-splines basis function, $w_i>0$ is the weight, $m=4|\mathcal N_h|$
is the number of basis functions.

A finite dimensional subspace $\tilde V_h\subset H^1(\Omega)$ is defined as
\[
\tilde V_h=\{F_i(x,y):F_i(x,y)=\phi_i\circ F^{-1},\,\,i=1,2,\cdots,m\}.
\]
The isogeometric approximation of the weak form in \eqref{eq:var} is given as: Find $u_h\in \tilde V_h$ such that
\begin{equation}\label{eq:igadis}
B(u_h,v_h)=f(v_h)\quad\forall v_h\in\tilde V_h.
\end{equation}
The approximate solution $u_h$ can be written as
\[
u_h(x,y)=\sum_{i=1}^m c_i F_i(x,y)=\sum_{i=1}^m c_i \phi_i\circ F^{-1}(x,y),
\]
with unknown coefficients $c_i$, $i=1,2,\cdots,m$.

To obtain a higher-order numerical approximation of the gradient $\nabla u$, we employ the framework established in previous sections to construct the numerical gradient $G_hu_h$. To this end, for the solution $u_h$ of equation \eqref{eq:igadis}, we first perform a pull-back operation to the reference domain $\hat{\Omega}$, obtaining $\hat u_h$, which can be expressed as:
\[
    \hat u_h = \sum_{i=1}^m c_i \phi_i.
\]
For the spline function $\hat{u}_h$ on the reference domain $\hat{\Omega}$, we apply the PPR operator constructed in Section \ref{sec:recovery} to perform gradient recovery, obtaining the recovered numerical gradient $\hat G_h\hat u_h$ on the reference domain.
Define the Jacobian of the mapping $F$ as:
\[
\mathbf J = \begin{pmatrix}
\frac{\partial x}{\partial s} & \frac{\partial x}{\partial t} \\
\frac{\partial y}{\partial s} & \frac{\partial y}{\partial t}
\end{pmatrix}.
\]
Then, we push-forward $\hat G_h\hat u_h$ to the physical domain to obtain the recovered gradient $G_hu_h$ by
\[
G_hu_h = \mathbf J^{-T} \hat G_h \hat u_h \circ F^{-1}(x,y).
\]
Based on the same arguments presented in Section \ref{subsec:properties} and employing the standard estimation techniques in isogeometric analysis developed in \cite{BBCHS2006}, the consistency result stated in Theorem \ref{thm:cons} can be directly extended to general physical domains $\Omega$.

\subsection{Recovery-based a posteriori error estimator}\label{sec:posteriori}
A byproduct of the PPR operator is its utility in constructing \textit{a posteriori} error estimators. For any element $K\in\mathcal T_h$, we define
a local \textit{a posteriori} error estimator as
\begin{equation}\label{eq:estimatorlocal}
\eta_{h,K}:=\|G_hu_h-\nabla u_h\|_{0,K}
\end{equation}
and the corresponding global error estimator is given by
\begin{equation}\label{eq:estimatorglobal}\eta_h:=\left(\sum_{K\in\mathcal T_h}\eta_{h,K}^2\right)^{1/2}.\end{equation}

To measure the performance of the a posteriori error estimator \eqref{eq:estimatorlocal} or \eqref{eq:estimatorglobal},
we introduce the effective index
\[\kappa_h:=\frac{\|G_hu_h-\nabla u_h\|_{0,\Omega}}{\|\nabla u-\nabla u_h\|_{0,\Omega}}.\]
The \textit{a posteriori} error estimator \eqref{eq:estimatorlocal} or \eqref{eq:estimatorglobal} is said to be asymptotically
exact if \[\lim_{h\to0}\kappa_h=1.\]

\section{Superconvergence analysis on the translation invariant mesh}\label{sec:super}
In this section, we employ the superconvergence analysis via difference quotients, as developed in \cite{Wa1995}, to prove that the proposed gradient recovery method is superconvergent for translation invariant spline spaces.

We consider the following variational problem: Find $u\in H^1(\Omega)$ such that
\begin{equation}\label{eq:var}
  B(u,v)=\int_\Omega(\mathcal A\nabla u+\mathbf bu)\cdot\nabla v+cuv=f(v)\quad\forall v\in H^1(\Omega).
\end{equation}
Here $\mathcal A$ is a $2\times 2$ symmetric positive definite matrix, $\mathbf b$ is a vector, $c$ is a real number
and $f(\cdot)$ is a linear functional on $H^1(\Omega)$. For the sake of analytical convenience, all coefficient functions are assumed to be constant, and the physical domain $\Omega$ is taken as the unit square.

In order to insure \eqref{eq:var} admits a unique solution, we assume that the bilinear form $B(\cdot,\cdot)$ is bounded:
\[B(u,v)\leq C\|u\|_{1,\Omega}\|v\|_{1,\Omega}\quad \forall u,v\in H^1(\Omega),\]
and satisfies the inf-sup conditions (\cite{2006mixed,XZ2003}):
\[\inf_{u \in H^1(\Omega)} \sup_{v \in H^1(\Omega)} \frac{B(u, v)}{\|u\|_{1,\Omega}\|v\|_{1,\Omega}} = \sup_{u \in H^1(\Omega)} \inf_{v \in H^1(\Omega)} \frac{B(u, v)}{\|u\|_{1,\Omega}\|v\|_{1,\Omega}} \geq \gamma > 0.\]

The PHT-splines approximation of \eqref{eq:var} is to seek $u_h\in V_h$ satisfying
\begin{equation}\label{eq:varh}
  B(u_h,v_h)=f(v_h)\quad\forall v_h\in V_h.
\end{equation}

To insure a unique solution for \eqref{eq:varh}, we assume the discretization inf-sup conditions:
\[\inf_{u_h \in V_h} \sup_{v_h \in V_h} \frac{B(u, v)}{\|u\|_{1,\Omega}\|v\|_{1,\Omega}} = \sup_{u_h\in V_h} \inf_{v_h\in V_h} \frac{B(u, v)}{\|u\|_{1,\Omega}\|v\|_{1,\Omega}} \geq \tilde\gamma > 0.\]
From \eqref{eq:var} and \eqref{eq:varh}, it is obvious that the consistency property
\begin{equation}\label{eq:varcon}
  B(u-u_h,v_h)=0\quad\forall v_h\in V_h
\end{equation}
is true.

For any subdomain $\mathcal D\subset\Omega$, we denote $V_h(\mathcal D)$ as the restrictions of splines in $V_h$ to $\Omega$, and denote
$V_h^\comp(\Omega)$ as the set of those splines in $V_h(\mathcal D)$ with compact support in the
interior of $\mathcal D$ (\cite{Wa1995}). Let $\Omega_0\ssubset\Omega_1\ssubset\Omega_2\ssubset\Omega$ be separated by $d\geq c_0h$ with a generic
positive constant $c_0$, and $\ell$ be a direction, that is a unit vector in $\mathbb R^2$. Let $\tau$ be a parameter, which will typically be a
multiple of $h$. Let $T_\tau^\ell$ denote translation by $\tau$ in the direction $\ell$, namely,
\begin{equation}\label{eq:ttau}
  T_\tau^\ell v(x)=v(x+\tau\ell),
\end{equation}
and for an integer $\nu$, define
\begin{equation}\label{eq:ttau1}
  T_{\nu\tau}^\ell v(x)=v(x+\nu\tau\ell).
\end{equation}
Following the definition of \cite{Wa1995}, the PHT-splines space $V_h$ is called translation
invariant by $\tau$ in the direction $\ell$ if
\begin{equation}\label{eq:tran}
  T_{\nu\tau}^\ell v\in V_h^\comp(\Omega)\quad\forall v\in V_h^\comp(\Omega_1),
\end{equation}
for some integer $\nu$ with $|\nu|<M$. Equivalently, $\mathcal T_h$ is called a translation invariant
mesh. In particular, \eqref{eq:varcon} holds for any $v_h\in V_h^\comp(\Omega)$.

Our main theoretical analysis tool is superconvergence by difference quotient, as discussed in \cite{NS1974, Wa1995}. This is possible partially due to the interior error estimates for the finite element approximation, see \cite{NS1974} and \cite{SW1977,SW1995}.

The key observation is that $G_h$ can be viewed as a finite difference operator. As we have explained in previous section, the selection of sampling points in $\Omega_z$ is flexible. To adopt the argument of superconvergence by difference quotient, we choose the sampling points of the same type as the assembly point $z$, that is to say, we only select the basis vertex as the sampling points. It's essential to emphasize that this restriction is solely for theoretical purposes and not for computational purposes. %In Figure \ref{fig:sample}, we demonstrate two typical patterns in quadratic and cubic elements to illustrate how to choose sampling points of the same type in regularly patterned uniform meshes. Similar ideas are applicable to other translation-invariant meshes, such as chevron, criss-cross, union-Jack, and equilateral patterned uniform meshes.

Then, we can express the recovered gradient as
\begin{equation}\label{equ:diffq}
G_h^au_h(z)=\sum_{|\nu| \leq M} \sum_{i=1}^{n_\ell} C_{\nu, h}^i u_h\left(z+\nu h \ell_i\right) =
 \sum_{|\nu| \leq M} \sum_{i=1}^{n_\ell} C_{\nu, h}^iT_{\nu \tau}^{\ell_i} u_h(z),
\end{equation}
for $a\in \{x, y\}$, some natural numbers $M$, $n_\ell$,  and some direction $\ell_i$. Moreover, $C_{\nu, h}^i=\mathcal O(h^{-1})$.

Based on such an observation, we can prove the following lemma.
\begin{lemma}\label{lem:sup1}
let $\Omega_1\ssubset\Omega$ be separated by $d = O(1)$ and let the PHT-splines space $V_h$ be translation invariant in the directions required
by the gradient recovery operator $G_h$ on $\Omega_1$. Then on any interior domain $\Omega_0\ssubset\Omega_1$, if $u\in H^5(\Omega_1)$, we have
\begin{equation}\label{ineq:sup1}
  \|G_h(u-u_h)\|_{0,\Omega_0}\leq Ch^4\|u\|_{5,\Omega_1}+C\|u-u_h\|_{-s,q,\Omega_1},
\end{equation}
for some $s\geq 0$ and $q\geq1$.
\end{lemma}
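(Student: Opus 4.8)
The plan is to split the error $G_h(u-u_h)$ on $\Omega_0$ into a consistency part and a discrete-error part, using the polynomial-preserving structure of $G_h$ together with the interior error estimates. First I would introduce an intermediate function: let $u_I = I_h u$ be the PHT-spline interpolant of $u$, and write
\[
G_h(u-u_h) = G_h(u - u_I) + G_h(u_I - u_h) = \bigl(\nabla u - G_h(I_h u)\bigr) + \nabla u - G_h(u_I - u_h) - \nabla u,
\]
or more cleanly, $G_h(u-u_h) = \bigl(G_h(I_h u) - \nabla u\bigr) + G_h(u_I - u_h)$. The first term is controlled by the consistency estimate of Theorem \ref{thm:cons}, which gives $\|G_h(I_hu) - \nabla u\|_{0,\Omega_0} \leq Ch^4\|u\|_{5,\Omega_1}$ (summing over the elements in $\Omega_0$, whose patches $\Omega_K$ stay inside $\Omega_1$ since $\Omega_0 \ssubset \Omega_1$ and $h$ is small). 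So the real work is the second term $w_h := u_I - u_h \in V_h$.

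Next I would exploit that $G_h^a$ acts as a finite difference operator via \eqref{equ:diffq}: since $C_{\nu,h}^i = \mathcal O(h^{-1})$ and each term is a translate $T_{\nu\tau}^{\ell_i} w_h$, boundedness of $G_h$ in $L^2$ (Lemma \ref{lem:bd2}) combined with the translation invariance of $V_h$ lets me bound $\|G_h w_h\|_{0,\Omega_0}$ by $\|w_h\|_{1,\Omega_1'}$-type quantities on a slightly larger interior domain $\Omega_0 \ssubset \Omega_1' \ssubset \Omega_1$. The key reduction is then to estimate $\|w_h\|_{1,\Omega_1'}$, i.e. $\|u_I - u_h\|_{1,\Omega_1'}$, which by the triangle inequality is at most $\|u - u_I\|_{1,\Omega_1'} + \|u - u_h\|_{1,\Omega_1'}$. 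The interpolation error $\|u - I_hu\|_{1,\Omega_1'} \leq Ch^4\|u\|_{5,\Omega_1}$ is standard for cubic PHT-splines. For the Galerkin error on the interior domain I would invoke the interior error estimate of Nitsche--Schatz type (\cite{NS1974,SW1977,SW1995}): since $B(u - u_h, v_h) = 0$ for all $v_h \in V_h^{\comp}(\Omega_1)$ by \eqref{eq:varcon}, one has
\[
\|u - u_h\|_{1,\Omega_1'} \leq C\Bigl(\inf_{\chi \in V_h}\|u - \chi\|_{1,\Omega_1} + \|u - u_h\|_{-s,q,\Omega_1}\Bigr) \leq Ch^4\|u\|_{5,\Omega_1} + C\|u - u_h\|_{-s,q,\Omega_1}.
\]
Putting these together yields \eqref{ineq:sup1}.

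The main obstacle I expect is making the difference-quotient argument fully rigorous, i.e. showing that $G_h w_h$ on the small domain $\Omega_0$ is genuinely controlled by $\|w_h\|_{1}$ on an interior domain without picking up negative powers of $h$. The point is that although each coefficient $C_{\nu,h}^i$ is $\mathcal O(h^{-1})$, the recovered gradient applied to a generic $V_h$ function only costs one derivative's worth of scaling — this is exactly what Lemma \ref{lem:bd2} encodes — and the translation invariance \eqref{eq:tran} is what guarantees that the translated functions $T_{\nu\tau}^{\ell_i} w_h$ still live in (a compactly supported subspace of) $V_h$ on the relevant subdomain, so that the interior estimate \eqref{eq:varcon} and Lemma \ref{lem:bd2} can both be applied on nested domains $\Omega_0 \ssubset \Omega_1' \ssubset \Omega_1$. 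Carefully choosing this chain of subdomains, each separated from the next by a fixed multiple of $h$, and tracking that all patches $\Omega_{z_i}$ used in the recovery at points of $\Omega_0$ remain inside $\Omega_1$, is the technical heart of the argument; the rest is assembling the $\mathcal O(h^4)$ terms from consistency and interpolation.
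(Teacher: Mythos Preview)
Your route has a genuine gap: for bi-cubic PHT-splines the interpolation and best-approximation errors in the $H^1$ seminorm are only $\mathcal O(h^3)$, not $\mathcal O(h^4)$. Concretely, your claims $\|u-I_hu\|_{1,\Omega_1'}\leq Ch^4\|u\|_{5,\Omega_1}$ and $\inf_{\chi\in V_h}\|u-\chi\|_{1,\Omega_1}\leq Ch^4\|u\|_{5,\Omega_1}$ are both false; the correct bound is $Ch^3\|u\|_{4,\Omega_1}$, and extra smoothness of $u$ cannot improve this because the splines are only degree~$3$. Since you pass through $\|G_h w_h\|_{0}\leq C|w_h|_{1}$ (Lemma~\ref{lem:bd2}) and then estimate $|u_I-u_h|_{1}$ via $|u-u_I|_{1}+|u-u_h|_{1}$, your argument delivers at best $\mathcal O(h^3)$, which is no better than $\|\nabla u-\nabla u_h\|_0$ and misses the superconvergence entirely. (Your decomposition is also garbled: $G_h(u-u_I)$ is not $G_h(I_hu)-\nabla u$.)

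The paper's argument avoids the $H^1$ norm altogether. The crucial step you are not using is that translation invariance of $V_h$, together with the constant-coefficient assumption on $B$, gives $B\bigl(T_{\nu\tau}^\ell(u-u_h),v_h\bigr)=B\bigl(u-u_h,(T_{\nu\tau}^\ell)^*v_h\bigr)=0$ for $v_h\in V_h^{\comp}(\Omega_2)$; summing over the difference-quotient representation \eqref{equ:diffq} yields $B\bigl(G_h^x(u-u_h),v_h\bigr)=0$. In other words, $G_h^x(u-u_h)$ itself satisfies interior Galerkin orthogonality, so the Nitsche--Schatz interior estimate can be applied \emph{directly} to it in $L^2$. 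The approximation term that appears is then $\min_{v_h}\|G_h^xu-v_h\|_{0,\Omega_2}$, and since $G_h^xu$ is a good approximation to $u_x\in H^4(\Omega_1)$, this is $\mathcal O(h^4)$. The negative-norm remainder is handled by duality, $(G_h^x(u-u_h),\varphi)=(u-u_h,(G_h^x)^*\varphi)$, followed by a second application of interior estimates to $\|u-u_h\|_{0,\infty}$. That transfer of Galerkin orthogonality through the difference operator is the heart of the ``superconvergence by difference quotients'' technique, and it is exactly what buys the extra power of $h$ that your $H^1$-based approach cannot reach.
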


\begin{proof}
Let $\Omega_0\ssubset\Omega_2\ssubset \Omega_1$ be separated by $O(1)$.
We start from the observation:
\begin{equation}\label{ineq:sup1-1}
  B(T_{\nu\tau}^\ell(u-u_h),v_h)=B(u-u_h,T_{-\nu\tau}^\ell v_h)=B(u-u_h,(T_{\nu\tau}^\ell)^*v_h)=0,
\end{equation}
for any $v_h\in V_h^\comp(\Omega_2)$. Then we claim by the fact $G_h$ is a difference operator constructed from translation of type \eqref{equ:diffq} that
\begin{equation}\label{ineq:sup1-2}
  B(G_h^x(u-u_h),v_h)=B(u-u_h,(G_h)^*v_h)=0 \quad\forall v_h\in V_h^\comp(\Omega_2).
\end{equation}
Next, we apply the interior error estimate result provided in \cite{NS1974} and \cite{SW1977}, yields
\begin{equation}\label{ineq:sup1-3}
  \|G_h^x(u-u_h)\|_{0,\Omega_0}\leq C\min_{v_h\in V_h}\|G_hu-v_h\|_{0,\Omega_2}+Cd^{-s-\frac2q}\|G_h^x(u-u_h)\|_{-s,q,\Omega_2}.
\end{equation}
For the first term in the righthand of above inequality, we employ the standard approximation theory to obtain:
\begin{equation}\label{ineq:sup1-4}
  \min_{v_h\in V_h}\|G_hu-v_h\|_{0,\Omega_2}\leq Ch^4\|u\|_{5,\Omega_1}.
\end{equation}
Next, we pay attention on the second term in the righthand of \eqref{ineq:sup1-3}.
we observe that
\begin{equation}\label{ineq:sup1-5}
  \|G_h^x(u-u_h)\|_{-s,q,\Omega_1}=\sup_{\varphi\in C_0^\infty(\Omega_2),\|\varphi\|_{s,q',\Omega_2}=1}(G_h^x(u-u_h),\varphi)
\end{equation}
with $\frac1q+\frac{1}{q'}=1$. Let $\Omega_2+Mh$ be a subdomain  stretches out $Mh$ from $\Omega_2$,
 we use the fact that $\|(G_h^x)^*\varphi\|_{0,1,\Omega_2+Mh}$ is bounded uniformly with respect to $h$ when $s\geq1$ to obtain
\begin{equation}\label{ineq:sup1-6}
  \begin{split}
     (G_h^x(u-u_h),\varphi)&=(u-u_h,(G_h^x)^*\varphi)\\
     &\leq C \|u-u_h\|_{0,\infty,\Omega_2+Mh}\|(G_h^x)^*\varphi\|_{0,1,\Omega_2+Mh}\\
     &\leq C \|u-u_h\|_{0,\infty,\Omega_2+Mh}.
  \end{split}
\end{equation}
Applying interior estimates in \cite{NS1974,SW1977} again, we get
\begin{equation}\label{ineq:sup1-7}
  \begin{split}
    \|u-u_h\|_{0,\infty,\Omega_2+Mh}&\leq C\min_{v_h\in V_h}\|u-v_h\|_{0,\Omega_1}+Cd^{-s-\frac2q}\|u-u_h\|_{-s,q,\Omega_1}\\
    &\leq Ch^4\|u\|_{5,\Omega_1}+Cd^{-s-\frac2q}\|u-u_h\|_{-s,q,\Omega_1}.
  \end{split}
\end{equation}
Since $d=O(1)$, we combine \eqref{ineq:sup1-3}, \eqref{ineq:sup1-4} and \eqref{ineq:sup1-7} to obtain
\[
  \|G_h^x(u-u_h)\|_{0,\Omega_0}\leq Ch^4\|u\|_{5,\Omega_1}+C\|u-u_h\|_{-s,q,\Omega_1}.
\]
Following the same argument, we can establish the same result for $G_h^y$, and complete the proof.
\end{proof}

We are now in a position to state our main result in this section.
\begin{theorem}\label{thm:insup}
  Under the same conditions as in the Lemma \ref{lem:sup1}, we have
  \begin{equation}\label{equ:sup}
		\|\nabla u - G_hu_h\|_{0, \Omega_0} \lesssim h^4\|u\|_{5, \Omega_1}+\|u-u_h\|_{-s, q, \Omega_1},
	\end{equation}
for any $s \geq 0$ and $q \geq 1$.
\end{theorem}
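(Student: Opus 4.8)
The plan is to combine the consistency estimate for the recovery operator (Theorem~\ref{thm:cons}) with the interior superconvergence estimate for the recovered error (Lemma~\ref{lem:sup1}) via a triangle inequality. Write
\[
\nabla u - G_hu_h = \bigl(\nabla u - G_h(I_hu)\bigr) + \bigl(G_h(I_hu) - G_hu_h\bigr) = \bigl(\nabla u - G_h(I_hu)\bigr) + G_h(I_hu - u_h),
\]
using linearity of $G_h$. The first term is controlled on all of $\Omega_1$ (hence on $\Omega_0$) by Theorem~\ref{thm:cons}, which gives $\|\nabla u - G_h(I_hu)\|_{0,\Omega_0} \leq Ch^4\|u\|_{5,\Omega_1}$, provided $u\in H^5$ of a suitable neighborhood; since $\Omega_0\ssubset\Omega_1$ this is covered by the hypothesis $u\in H^5(\Omega_1)$ together with the finite-overlap property of the patches $\Omega_K$.

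For the second term, I would further split $G_h(I_hu - u_h) = G_h(u - u_h) - G_h(u - I_hu)$, so that
\[
\|\nabla u - G_hu_h\|_{0,\Omega_0} \leq \|\nabla u - G_h(I_hu)\|_{0,\Omega_0} + \|G_h(u-u_h)\|_{0,\Omega_0} + \|G_h(u - I_hu)\|_{0,\Omega_0}.
\]
The middle term is exactly what Lemma~\ref{lem:sup1} bounds: $\|G_h(u-u_h)\|_{0,\Omega_0}\leq Ch^4\|u\|_{5,\Omega_1} + C\|u-u_h\|_{-s,q,\Omega_1}$. For the last term, one applies the $L^2$-boundedness of $G_h$ from Lemma~\ref{lem:bd2}, namely $\|G_h(u-I_hu)\|_{0,\Omega_0}\leq C|u - I_hu|_{1,\widetilde\Omega_0}$ on a slightly enlarged domain $\widetilde\Omega_0\ssubset\Omega_1$, and then invokes the standard PHT-spline interpolation error estimate $|u - I_hu|_{1,\widetilde\Omega_0}\leq Ch^4\|u\|_{5,\widetilde\Omega_0}$ (the interpolation operator $I_h$ reproduces $\mathbb{P}_{33}$, so by Bramble--Hilbert this bound holds with the $h^4$ rate in the $H^1$ seminorm). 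Collecting the three pieces yields \eqref{equ:sup}.

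The main technical care — though not a deep obstacle — is bookkeeping of the nested interior domains: each application of a local estimate (consistency, $L^2$-boundedness of $G_h$, interior estimate) slightly enlarges the domain by a fixed number of mesh layers or by an $O(1)$ separation, so I would fix a chain $\Omega_0\ssubset\Omega_0'\ssubset\Omega_2\ssubset\Omega_1$ with all separations $O(1)$ at the outset and route each inequality through the appropriate link, absorbing the finitely many patch overlaps into the generic constant $C$. A second small point is ensuring the negative-norm term $\|u-u_h\|_{-s,q,\Omega_1}$ is not worsened: since it already appears in Lemma~\ref{lem:sup1} with the same indices $s,q$ and the same domain $\Omega_1$, no further manipulation is needed and it simply passes through to the conclusion. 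Thus the proof is essentially a three-line triangle-inequality argument once Theorem~\ref{thm:cons}, Lemma~\ref{lem:bd2}, and Lemma~\ref{lem:sup1} are in hand.
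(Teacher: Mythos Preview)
Your decomposition is exactly the paper's: it writes $\nabla u - G_hu_h = (\nabla u - G_hI_hu) + (G_hI_hu - G_hu) + (G_hu - G_hu_h)$ and bounds the three pieces via Theorem~\ref{thm:cons}, approximation of $u$ by $I_hu$, and Lemma~\ref{lem:sup1}, respectively---the same ingredients in the same roles, with your domain bookkeeping spelled out more carefully than the paper bothers to. One small caveat: Lemma~\ref{lem:bd2} is stated only for $u_h\in V_h$, so invoking it for $G_h(u-I_hu)$ needs a word of care; in the theoretical setting the sampling points are basis vertices and $I_hu$ matches $u$ there, so this term is in fact zero (the paper just writes ``approximation theory'' without further comment).
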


\begin{proof}
 To establish the superconvergence result, we decompose the error as
 \begin{equation}\label{equ:decomp}
 	\nabla u - G_hu_h = (\nabla u  - G_hu_I) + (G_hu_I-G_hu) + (G_hu - G_hu_h),
 \end{equation}
 where $u_I=I_hu$ and we shall estimate errors term by term  in  \eqref{equ:decomp}. The first term can be estimated by using the
 consistency of $G_h$ as in Theorem \ref{thm:cons}, namely,
 \begin{equation}\label{equ:firstt}
 	\|\nabla u  - G_hu_I\|_{0, \Omega_0} \leq h^4\|u\|_{5, \Omega_1}.
 \end{equation}
 The second term can be bounded by the approximation theory as
 \begin{equation}\label{equ:secondt}
	\|G_hu_I-G_hu\|_{0,\Omega_0}\leq h^4\|u\|_{5,\Omega_1}.
\end{equation}
For the last term, we have
\begin{equation}\label{equ:thirdt}
	\|G_h\left(u-u_h\right)\|_{0,\Omega_0} \leq h^4\|u\|_{5, \Omega_1}+\|u-u_h\|_{-s, q, \Omega_1},
\end{equation}
 by Lemma \ref{lem:sup1}. Substituting \eqref{equ:firstt} - \eqref{equ:thirdt} into \eqref{equ:decomp} gives
 the desired results.
 \end{proof}
\begin{remark}
Theorem \ref{equ:sup} is a superconvergence result under the condition
\[\|u-u_h\|_{-s, q, \Omega_1}\leq Ch^{3+\sigma},\,\,\sigma>0.\]
The reader is referred to \cite{NS1974} for negative norm estimates.
\end{remark}
\begin{remark}
It is noteworthy that Theorem \ref{thm:insup} does not require global regularity of the exact solution $u$, being valid under the weaker assumption of interior regularity only.
\end{remark}

\section{Numerical tests}\label{sec:num} In this section, we provide some numerical tests to illustrate our gradient
recovery method for the cubic PHT-splines. Our primary focus is on testing the superconvergence properties and the effectiveness of adaptive mesh refinement.
As described in Remark \ref{rmk:sp}, when selecting sample points, we can flexibly choose points within the element. Here, in addition to the basis vertices, we select an element center point and four Gauss integration points in each element, as shown in Fig. \ref{fig:sp}.
\begin{figure}[ht]
\centering
\includegraphics[width=0.36\textwidth]{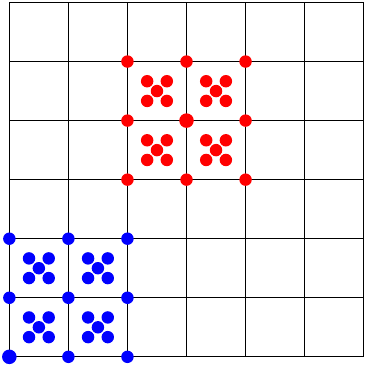}
\caption{Sample points for the gradient recovery.}
\label{fig:sp}
\end{figure}

\subsection{Superconvergence tests on the unit square} We present several different tests to verify our superconvergence results stated in Theorem \ref{thm:insup}.
We shall consider the following Poisson equation:
\begin{equation}\label{eq:model}
\begin{cases}
-\Delta u=f &\text{in } \Omega,\\
u = g &\text{on } \partial\Omega.
\end{cases}
\end{equation}
In general, the quality of $G_hu_h$ deteriorates near the boundary of $\Omega$. Therefore, we should study
the behavior of $G_hu_h$ inside $\Omega$. To distinguish between the
domains inside $\Omega$ and the ones adjacent to $\partial\Omega$, $\mathcal N_h$ is partitioned into $\mathcal N_{h,1}\cup\mathcal N_{h,2}$ where
\[\mathcal N_{h,1}=\{z\in \mathcal N_h:\text{dist}(z,\partial\Omega)\geq L\},\]
for a given positive constant $L$, in the following examples we choose $L$ = 0.125. Then we can define
\[\Omega_{h,1}=\{K\in\mathcal T_h: K \text{  has all of its vertices in }\mathcal N_{h,1}\}.\]

\textit{Test 1.} In this test, we solve the equation \eqref{eq:model} on a uniform tensor product mesh over
the unit square $\Omega=(0,1)\times(0,1)$ using PHT-splines. The exact solution is taken as $u=\sin(\pi x)\sin(\pi y)$ and $f$ is determined by $u$.
We numerically compare the errors of $\|\nabla u-\nabla u_h\|_{0,\Omega}$ and $\|\nabla u-G_hu_h\|_{0,\Omega_{h,1}}$, and plot the convergence rates in Fig.\ref{fig:test1}.

From Fig. \ref{fig:test1}, it can be observed that the convergence rate of the error in the energy norm is $\mathcal{O}(h^3)$, which is consistent with the known theoretical results. Moreover, the error of the recovered gradient by the PPR operator exhibits a superconvergent rate of $\mathcal{O}(h^4)$, which agrees well with the prediction given in Theorem \ref{thm:insup}.

\begin{figure}[ht]
    \centering
    \subfigure{
        \includegraphics[width=0.45\textwidth]{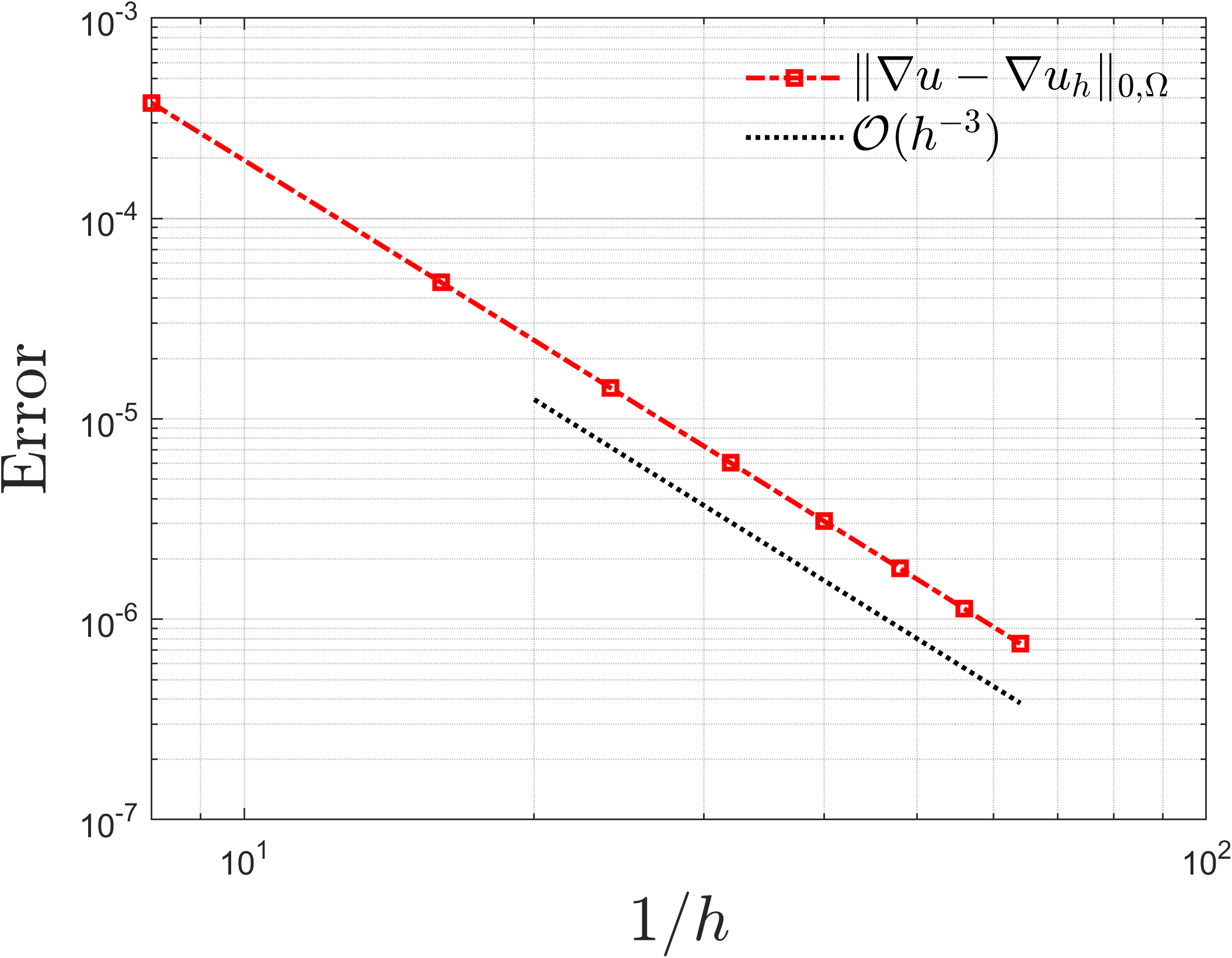}
    }
    \subfigure{
	\includegraphics[width=0.45\textwidth]{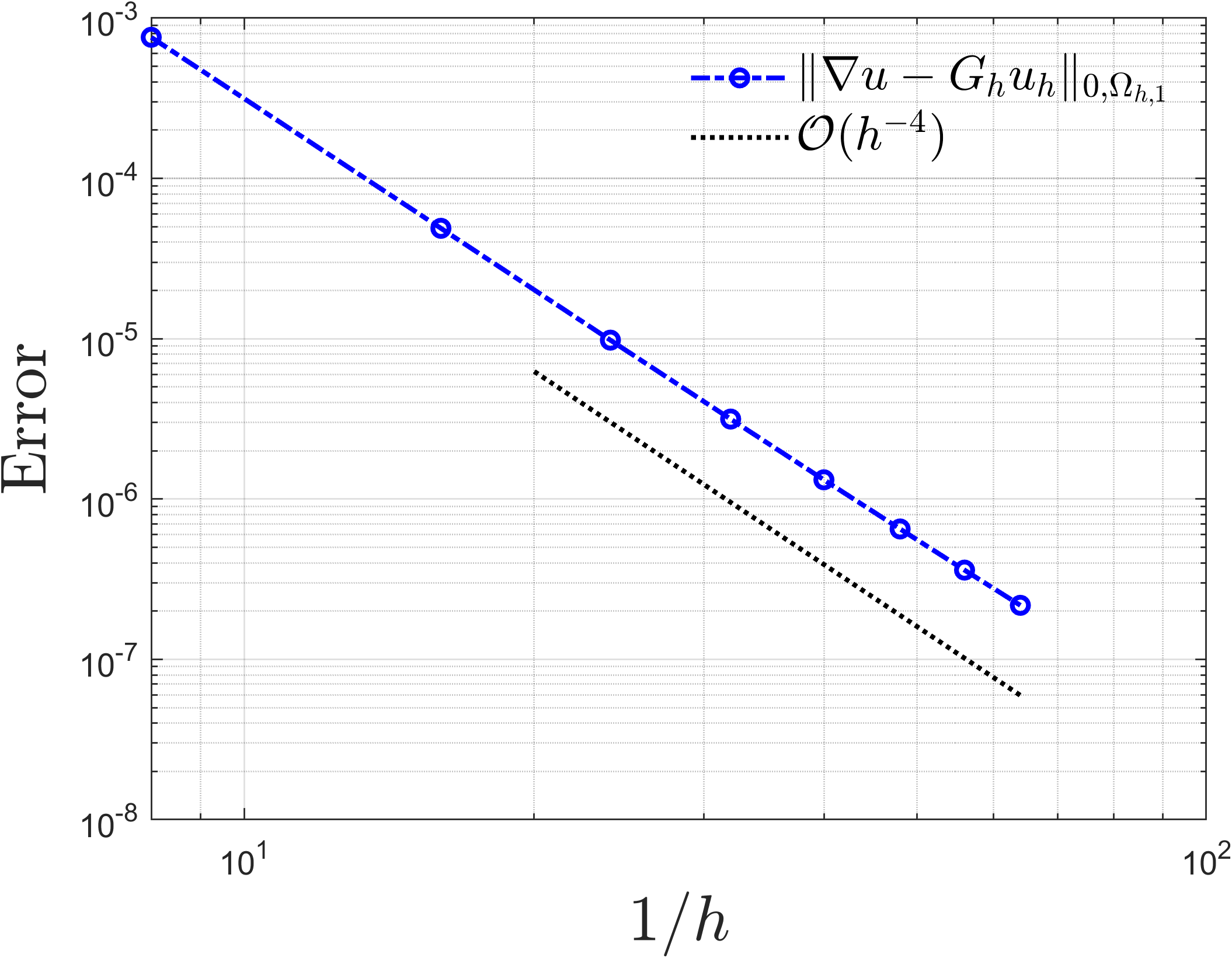}
    }
\caption{ Convergence behavior of numerical errors in Test 1. }
    \label{fig:test1}
\end{figure}

\textit{Test 2.}  In this test, we consider a problem with nonhomogeneous Dirichlet boundary conditions. The exact solution is taken as $u = x^7y^5$, and the right-hand side function $f$ is determined accordingly. The numerical results are presented in Fig.\ref{fig:test2}. From the figure, we observe that the convergence rates for both the energy norm error $\|\nabla u-\nabla u_h\|_{0,\Omega}$ and the recovered gradient error $\|\nabla u-G_hu_h\|_{0,\Omega_{h,1}}$ are consistent with those in Test 1, namely $\mathcal O(h^3)$ and $\mathcal O(h^4)$, respectively. This further confirms the theoretical prediction stated in Theorem \ref{thm:insup}.

\begin{figure}[ht]
    \centering
    \subfigure{
        \includegraphics[width=0.45\textwidth]{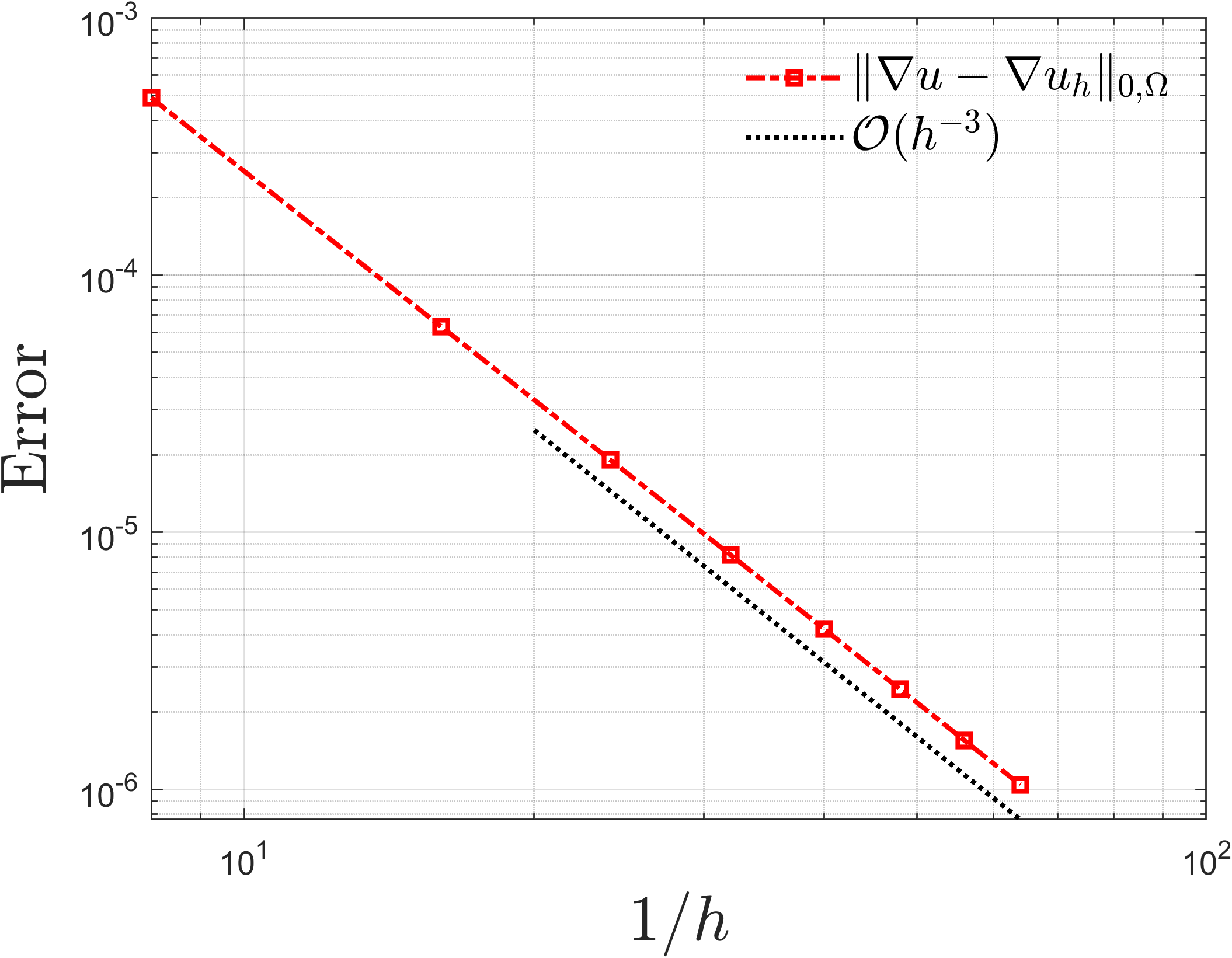}
    }
    \subfigure{
	\includegraphics[width=0.45\textwidth]{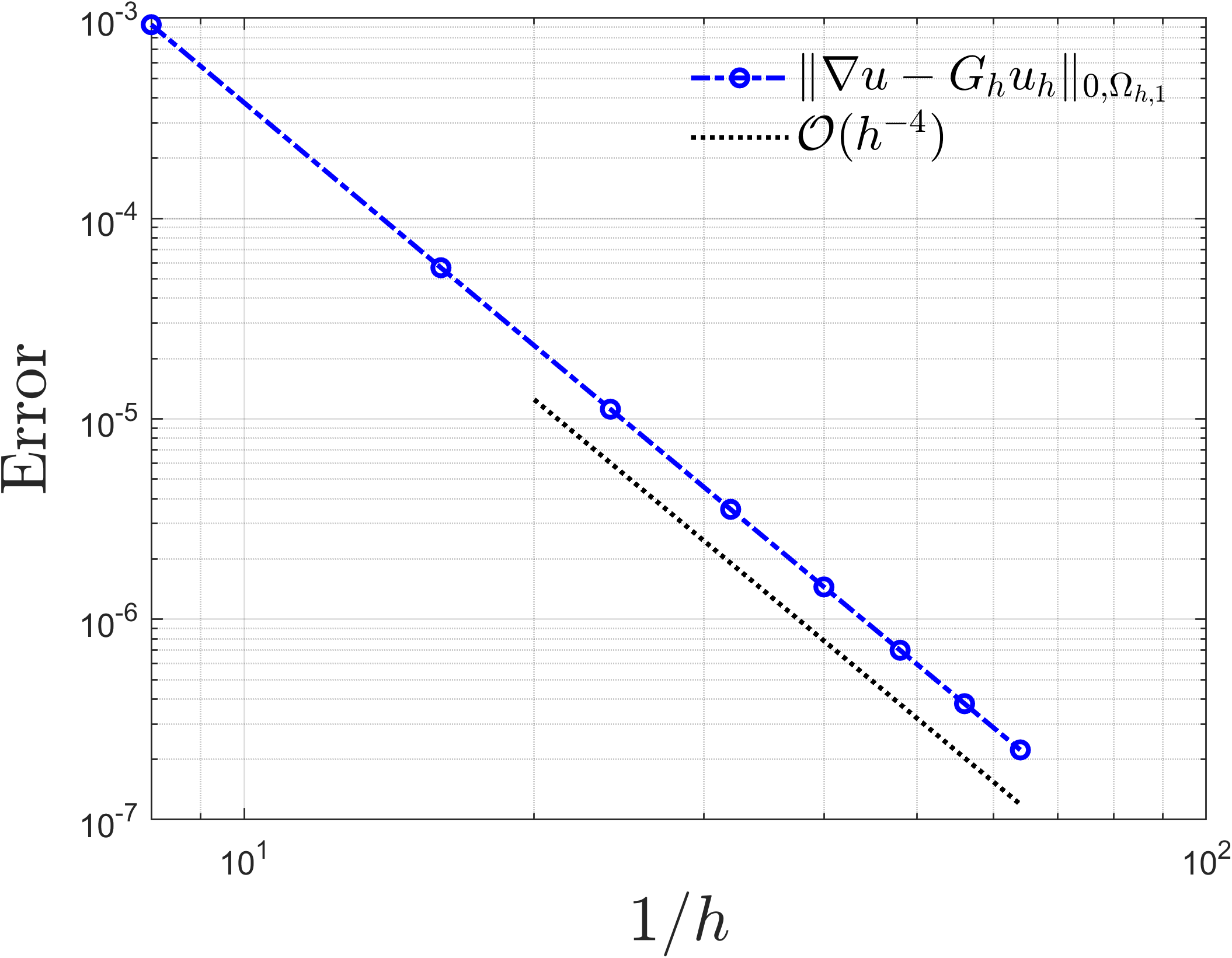}
    }
\caption{Error}
    \label{fig:test2}
\end{figure}

\subsection{Tests for the general physical domains}
\textit{Test 3.} In this test, we solve a similar problem as considered in \cite{KKJ2017}. The computational domain is a quarter annulus defined in polar coordinates as
\[
\Omega = \{ (\rho, \varphi) : \frac{1}{2} < \rho < 1,\ 0 < \varphi < \frac \pi2 \},
\]
and illustrated in Fig.~\ref{fig:test3} (left). The exact solution is given in Cartesian coordinates by
\[
u(x,y) = (x^2 + y^2 - \tfrac{1}{4})(x^2 + y^2 - 1)\sin(x)\sin(y),
\]
from which the right-hand side $f$ and boundary conditions are determined accordingly.

The convergence behavior of the numerical and recovered gradient errors is presented in Fig. \ref{fig:test3} (right). We observe that the discrete gradient error $\|\nabla u - \nabla u_h\|_{0,\Omega}$ converges at a rate of $\mathcal O(h^3)$, while the recovered gradient error
$\|\nabla u - G_h u_h\|_{0,\Omega_{h,1}}$ exhibits a superconvergent rate of $\mathcal O(h^4)$. These results demonstrate that the proposed PPR method retains its high accuracy and superconvergence property even on curved and more general physical domains, consistent with the findings in Tests 1 and 2 on the unit square.

\begin{figure}[ht]
    \centering
    \subfigure{
        \includegraphics[width=0.45\textwidth]{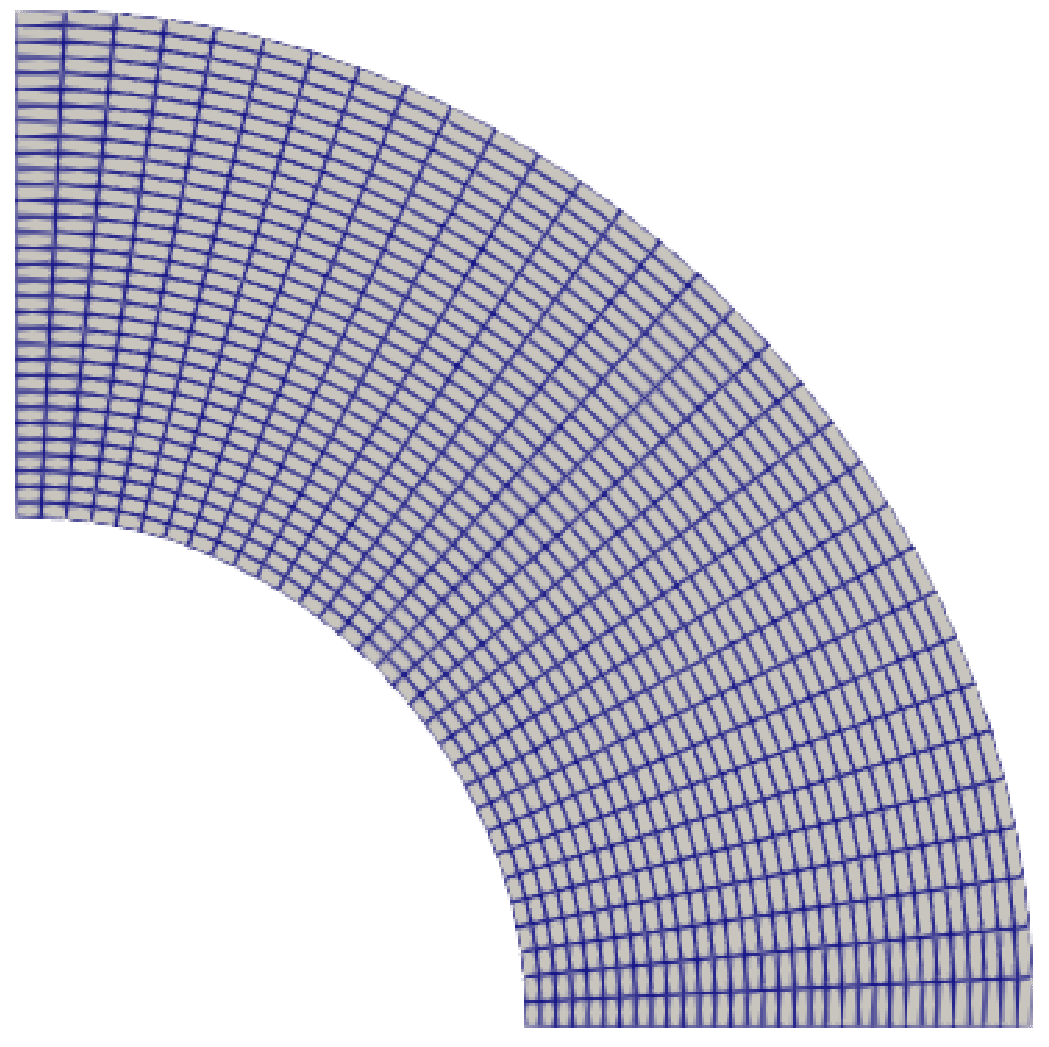}
    }
    \subfigure{
	\includegraphics[width=0.45\textwidth]{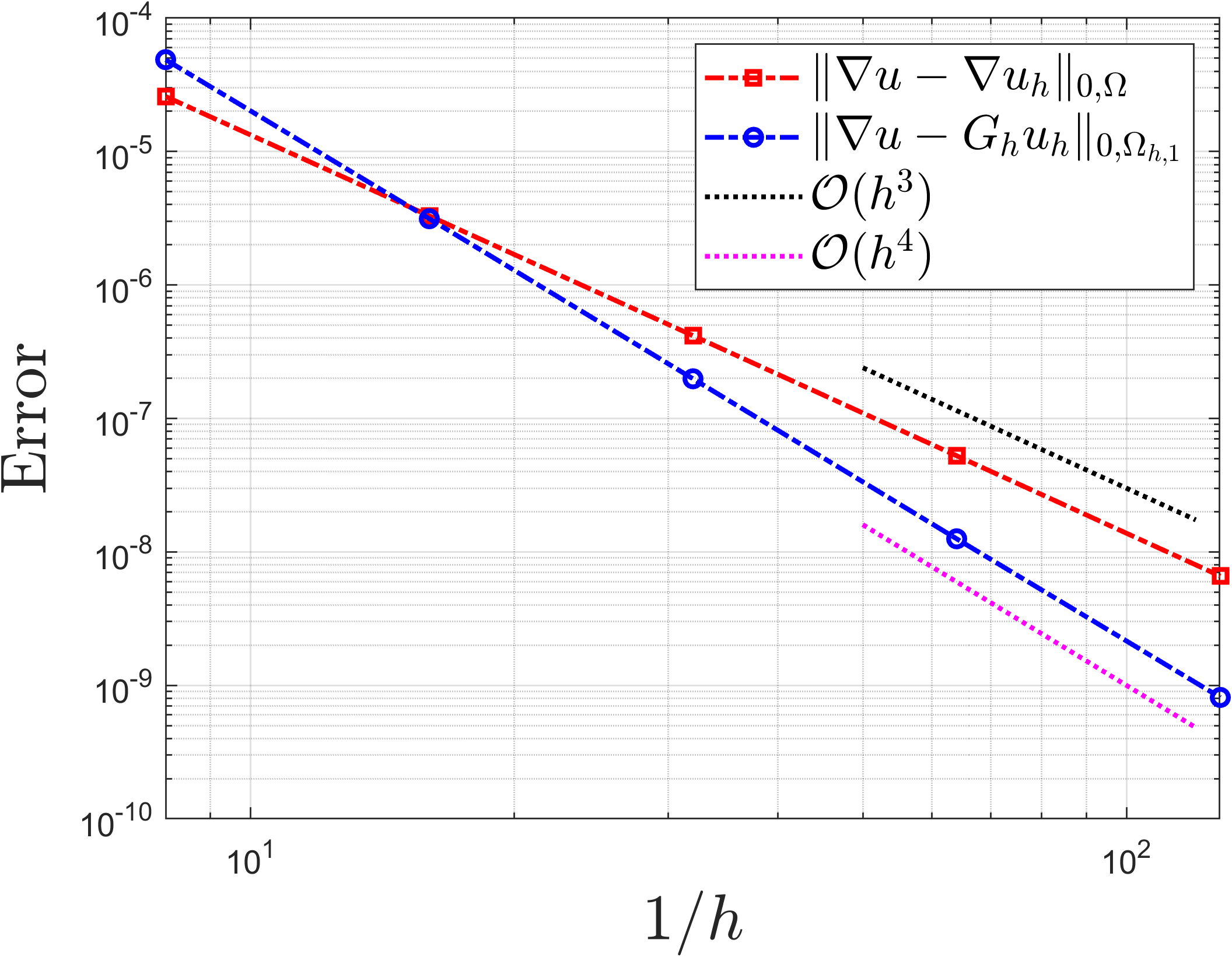}
    }
\caption{Illustration of the physical domain (left) and convergence behavior of numerical and recovered gradient errors (right) in Test 3.}
    \label{fig:test3}
\end{figure}

\textit{Test 4.} This test is similar to Test 3, but conducted on a different general physical domain. Specifically, the computational domain is taken as the unit circle centered at the origin. The exact solution is chosen as
\[
u(x,y) = x^7y^5,
\]
from which the source term $f$ and the nonhomogeneous Dirichlet boundary condition are computed accordingly.

Fig. \ref{fig:test4} (left) illustrates the circular physical domain, while the convergence behavior of the numerical and recovered gradient errors is shown in Fig. \ref{fig:test4} (right). As in Test 3, we observe that the numerical gradient error $\|\nabla u - \nabla u_h\|_{0,\Omega}$ converges at a rate of $\mathcal O(h^3)$, and the recovered gradient error $\|\nabla u - G_h u_h\|_{0,\Omega_{h,1}}$ exhibits a superconvergent rate of $\mathcal O(h^4)$. These results further confirm that the proposed PPR method is effective for general physical domains, and that the superconvergence properties observed on structured domains (Tests 1 and 2) still hold in more complex geometries.

\begin{figure}[ht]
    \centering
    \subfigure{
        \includegraphics[width=0.45\textwidth]{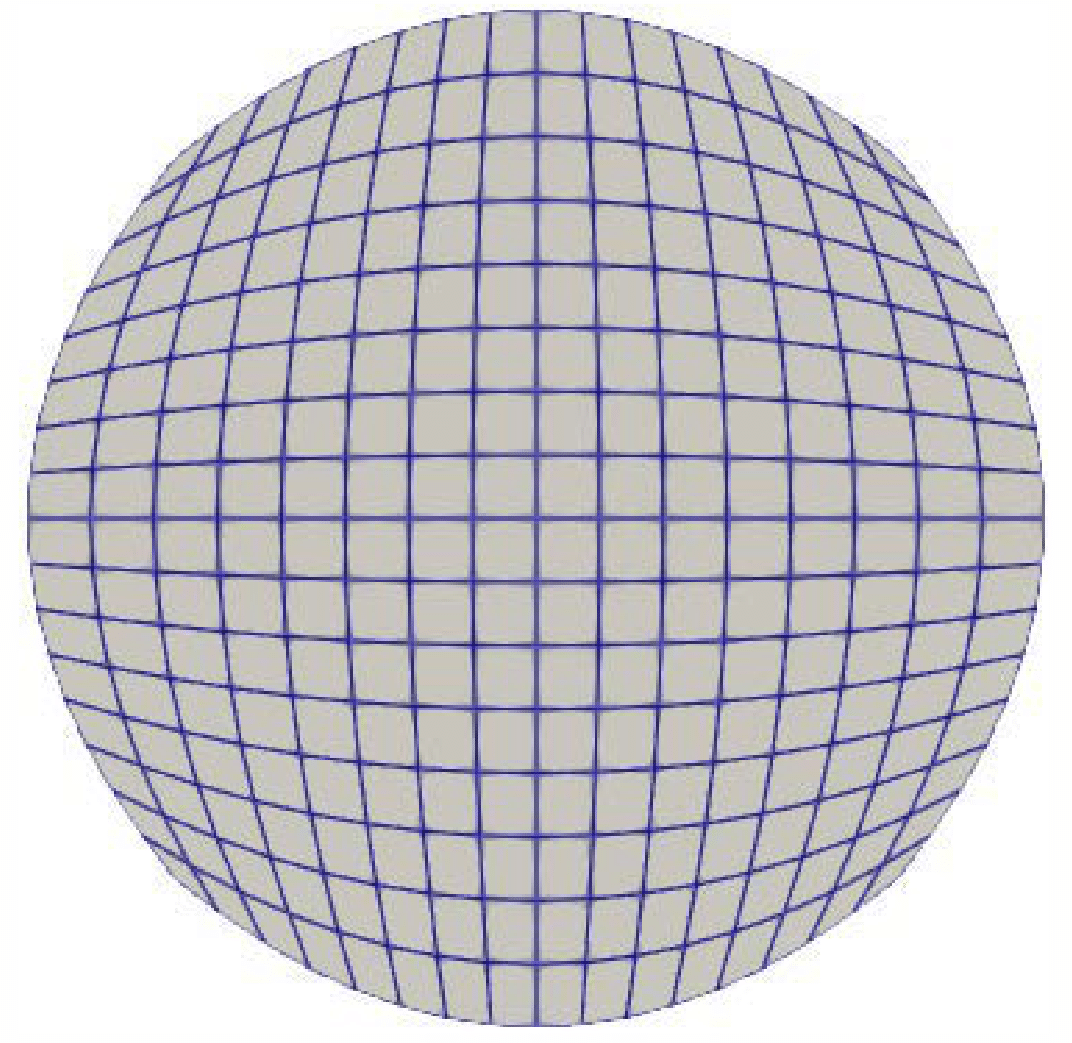}
    }
    \subfigure{
	\includegraphics[width=0.45\textwidth]{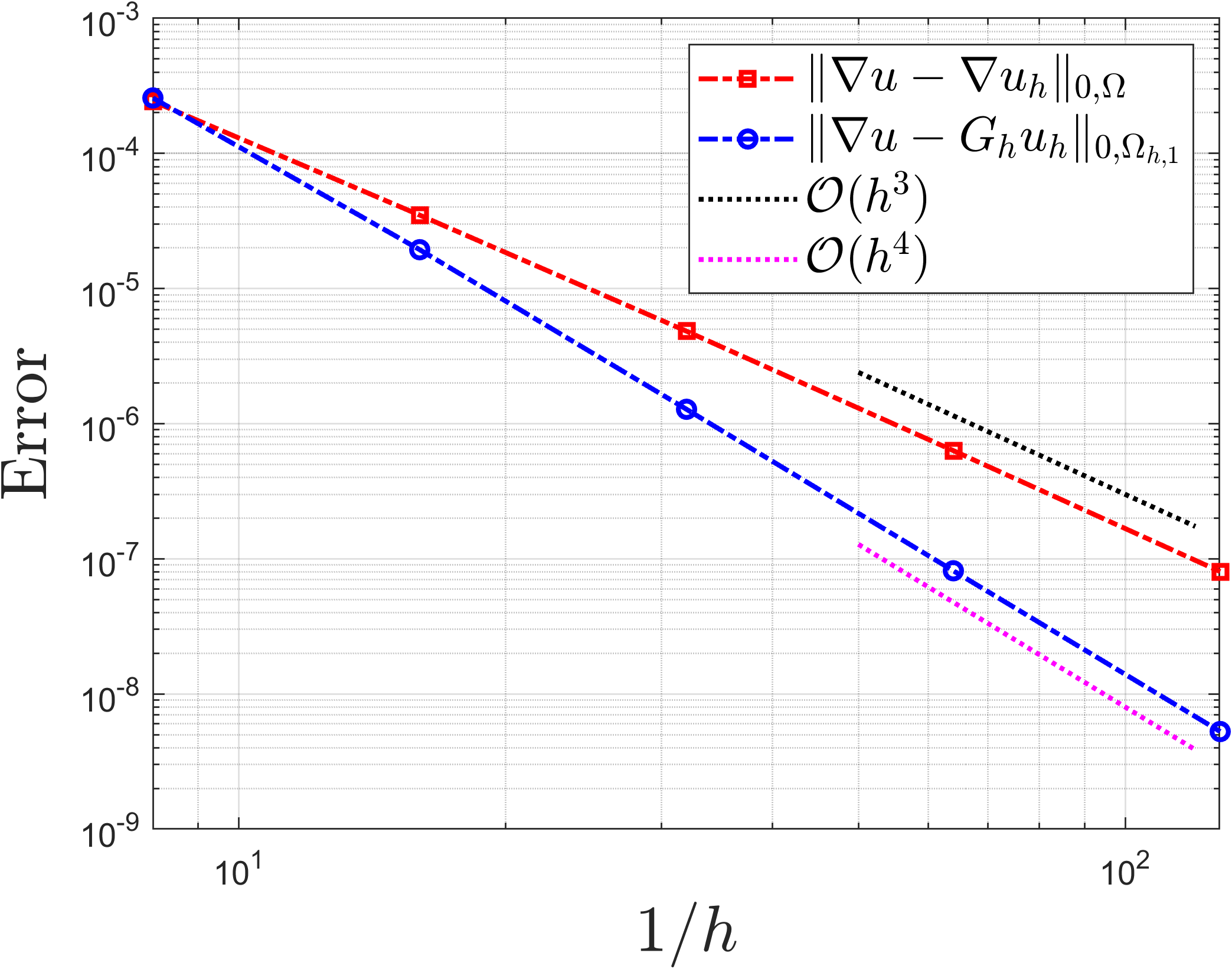}
    }
\caption{Illustration of the physical domain (left) and convergence behavior of numerical and recovered gradient errors (right) in Test 4.}
    \label{fig:test4}
\end{figure}

\subsection{Adaptive method tests}

This section is devoted to testing the performance of the proposed recovery-type \textit{a posteriori} error estimator \eqref{eq:estimatorglobal}. We apply adaptive mesh refinement to problems with localized or directional singularities and examine the convergence behavior of both the numerical and recovered gradients.

\textit{Test 5}.  In this test, we solve the Poisson equation \eqref{eq:model} on the unit square domain $\Omega = (0,1)^2$. The exact solution is given by
\[u=\frac1{(x-0.5)^2+(y-0.5)^2+0.02},\]
from which the corresponding source term $f$ and the Dirichlet boundary condition are computed. Note that the exact solution exhibits a sharp peak at the center point $(0.5, 0.5)$, introducing a localized singularity. To accurately capture this singular behavior, we apply the adaptive PHT-splines method.

In Fig. \ref{fig:test5}(left), we plot the convergence rates of the discrete $H^1$ semi-norm error $\|\nabla u-\nabla u_h\|_{0,\Omega}$ and the recovery error $\|\nabla u-G_h u_h\|_{0,\Omega}$. %For a uniformly partitioned tensor-product grid, according to the dimension formula \eqref{eq:dimformula}, the number of degrees of freedom (DOFs) is given by $4(h_l^{-1}+1)^2$, where $h_l$ is the mesh size. Consequently, we have $\text{DOFs}=\mathcal O(h_l^{-2})$.
%Therefore, in the context of adaptive mesh refinement, we consider the convergence rate $\mathcal{O}(N^{-1})$ with respect to the number of degrees of freedom $N$ as optimal.
It is clearly observed that the adaptive refinement successfully recovers the optimal convergence behavior of the numerical gradient, while the recovered gradient exhibits a superconvergent rate of $\mathcal O(h^3)$. This demonstrates the robustness and high accuracy of the PPR-based \textit{a posteriori} error estimator.

To further assess the performance of the estimator, we display the effective index in Fig. \ref{fig:test5}(right). The effective index rapidly approaches 1 as the mesh is refined, demonstrating that the proposed a posteriori error estimator is asymptotically exact.
\begin{figure}[ht]
    \centering
	\includegraphics[width=0.7\textwidth]{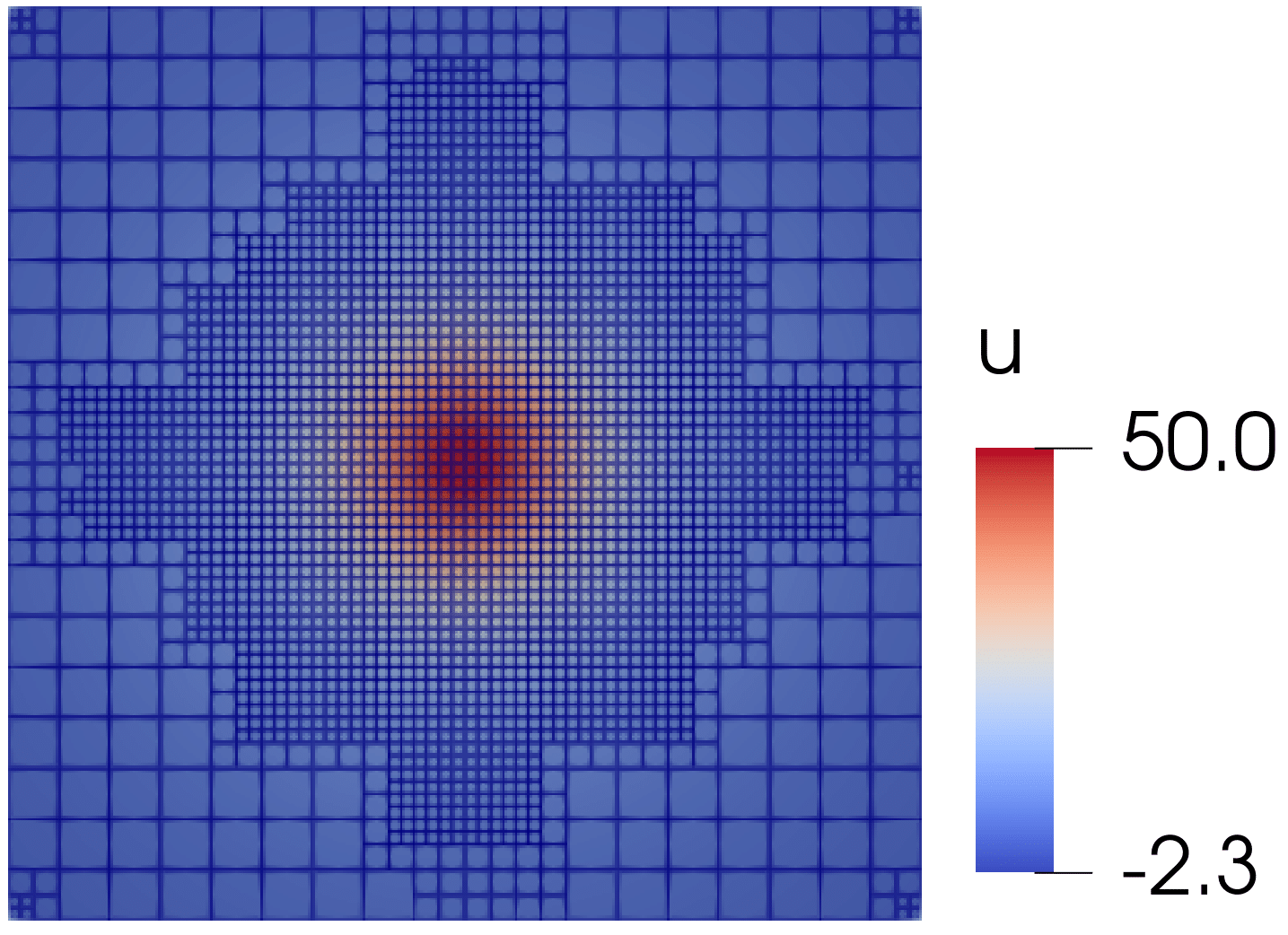}
\caption{Adaptively refined mesh for Test 5.}
    \label{fig:test5mesh}
\end{figure}

\begin{figure}[ht]
    \centering
    \subfigure{
        \includegraphics[width=0.45\textwidth]{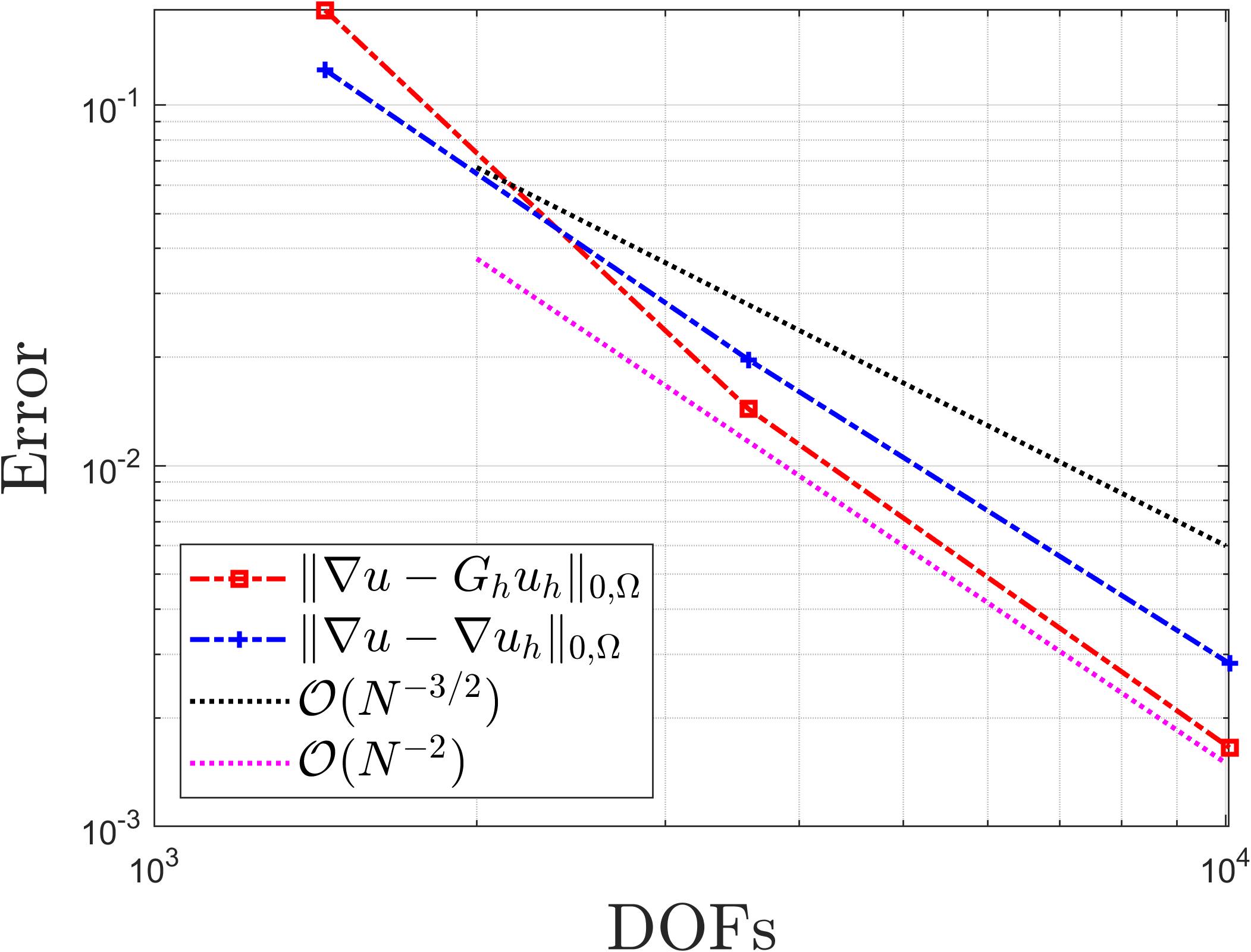}
    }
    \subfigure{
	\includegraphics[width=0.45\textwidth]{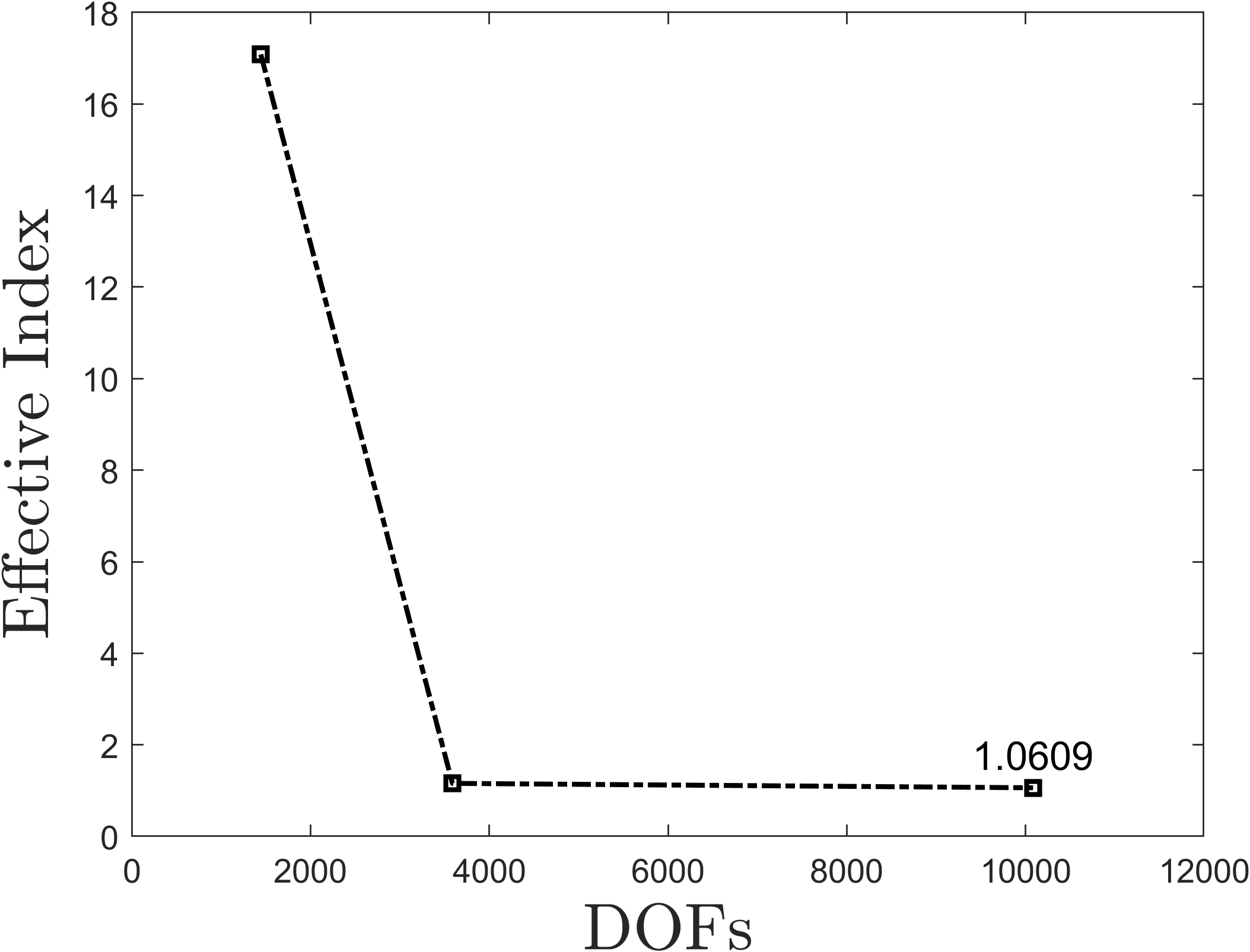}
    }
\caption{Convergence and effectivity results for the adaptive PHT-splines method in Test 5.}
    \label{fig:test5}
\end{figure}
\textit{Test 6}. As in \cite{CGPS2017,GXZ2019}, we consider the Poisson equation \eqref{eq:model} on the unit square $\Omega = (0,1)^2$ with a sharp interior layer. The exact solution is given by
\[u=16x(1-x)y(1-y)\arctan(25x-100y+25),\]
which exhibits a strong variation along a diagonal-like interior layer. This introduces a directional singularity that poses challenges for uniform mesh refinement. Similar to Test 5, we apply the adaptive PHT-splines method to resolve the local singular behavior.

In Fig. \ref{fig:test6mesh}, we show the corresponding adaptively refined mesh. The numerical results are shown in Fig. \ref{fig:test6}. The convergence rates of the discrete gradient error and the recovered gradient error are observed to follow the same trend as in Test 5, demonstrating optimal and superconvergent behavior, respectively. Notably, since the \textit{a posteriori} error estimator is asymptotically exact, our numerical results reveal that the recovered gradient achieves higher accuracy than the discrete gradient when the mesh is sufficiently refined. The corresponding effective index also converges rapidly towards 1, further confirming the accuracy and asymptotic exactness of the recovery-based \textit{a posteriori} error estimator.

\begin{figure}[ht]
    \centering
	\includegraphics[width=0.7\textwidth]{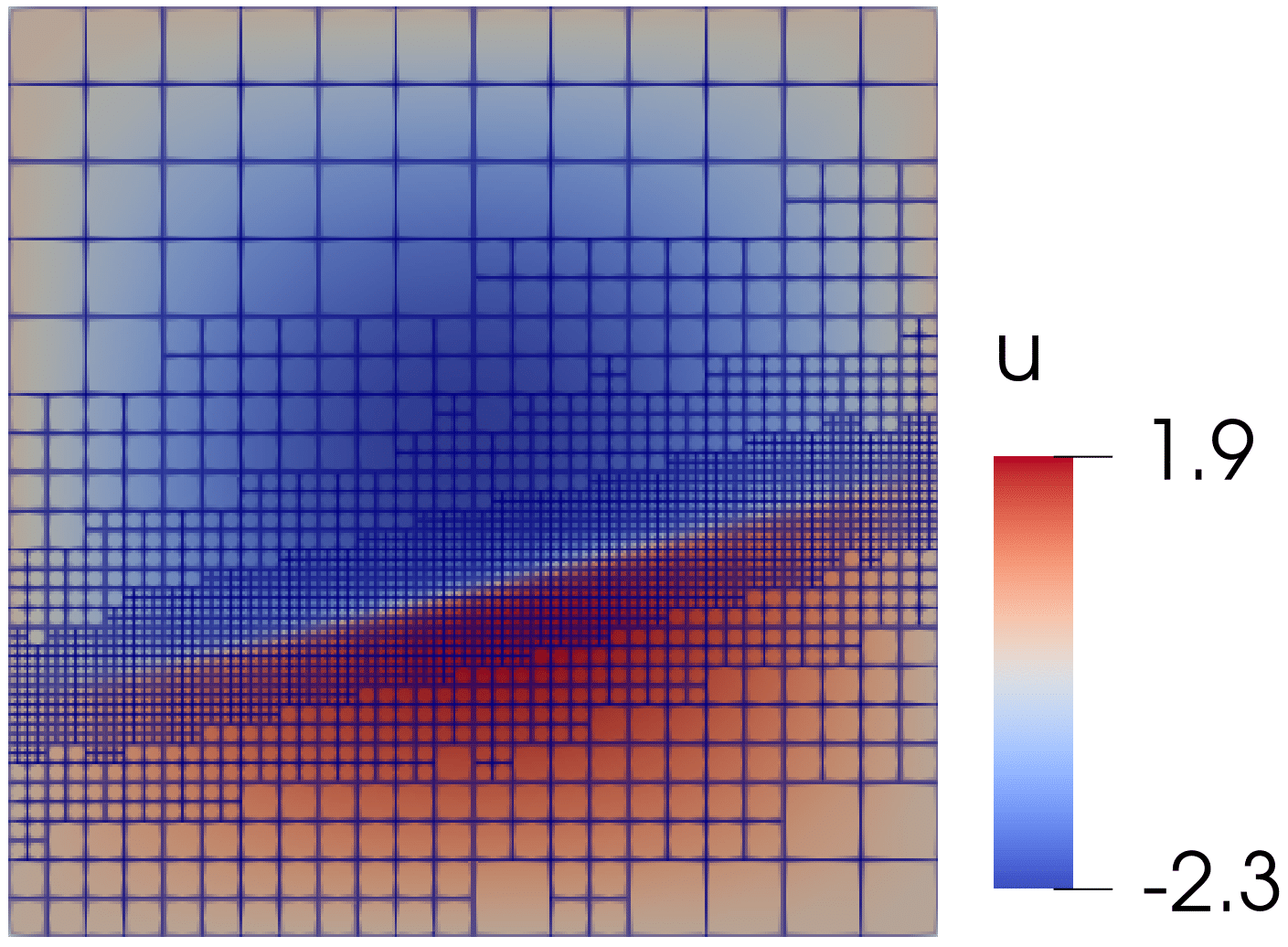}
\caption{Adaptively refined mesh for Test 6.}
    \label{fig:test6mesh}
\end{figure}

\begin{figure}[ht]
    \centering
    \subfigure{
        \includegraphics[width=0.45\textwidth]{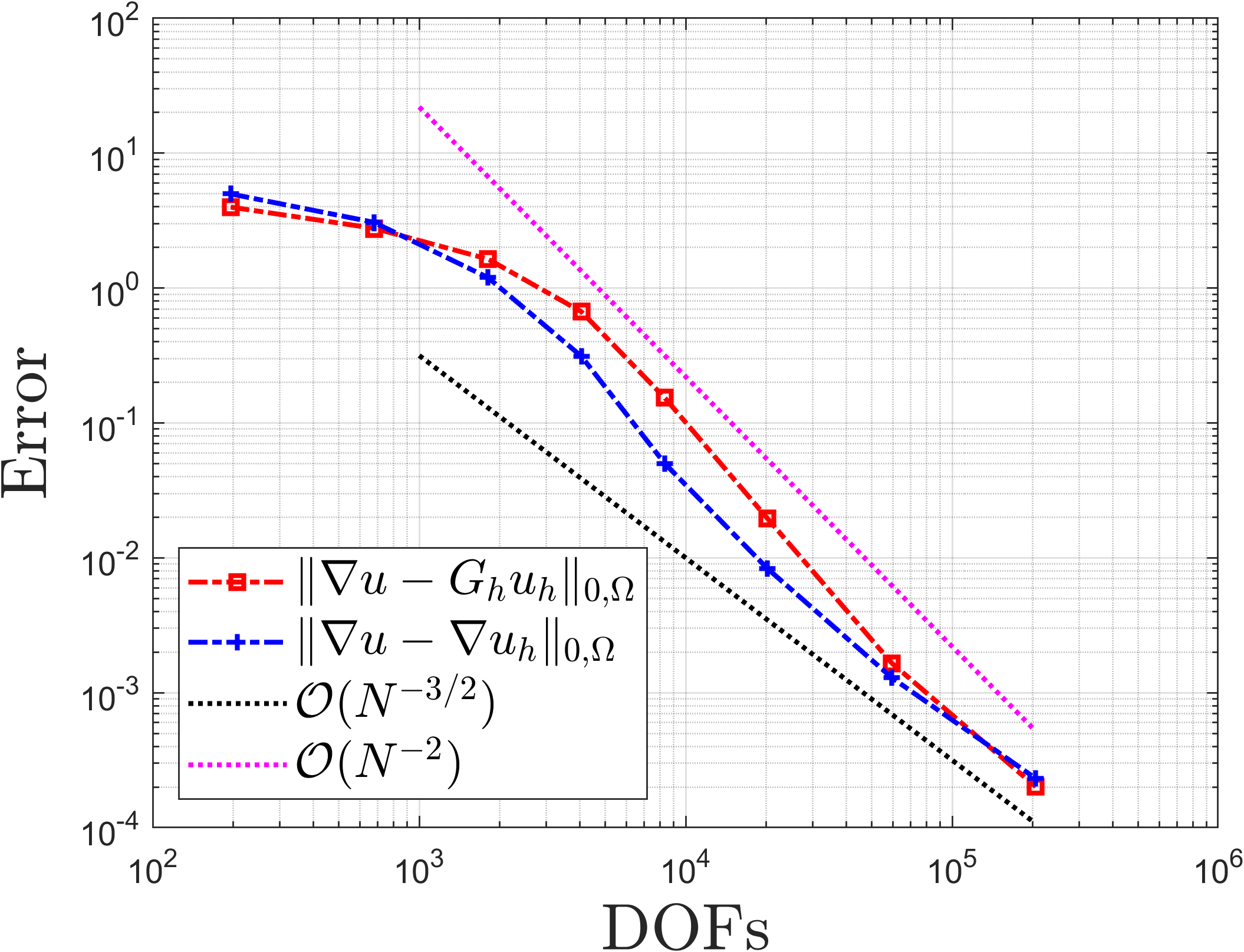}
    }
    \subfigure{
	\includegraphics[width=0.45\textwidth]{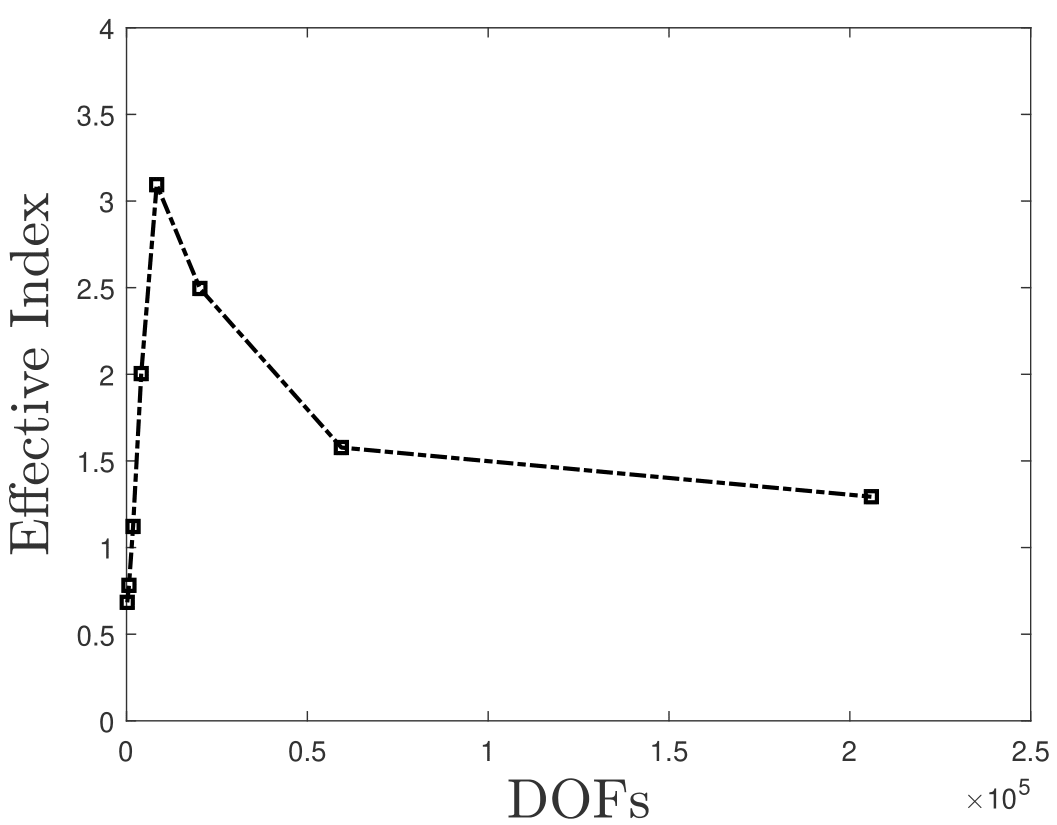}
    }
\caption{Convergence and effectivity results for the adaptive PHT-splines method in Test 6.}
    \label{fig:test6}
\end{figure}

\section{Conclusion}
In this paper, we have developed a polynomial preserving recovery method for PHT-splines in isogeometric analysis. The method takes advantage of the interpolation structure of PHT-splines and offers flexibility in sampling point selection. We established the superconvergence of the recovered gradient on translation-invariant meshes using classical analytical tools. In addition, a recovery-based \textit{a posteriori} error estimator was proposed for adaptive refinement.

\section*{Authorship contribution statement}
Ying Cai: Writing – original draft, Writing – review \& editing; Falai Chen: Writing – original draft, Writing – review \& editing; Hailong Guo: Writing – original draft, Writing – review \& editing; Hongmei Kang: Writing – original draft, Writing – review \& editing; Zhimin Zhang: Writing – original draft, Writing – review \& editing.

\section*{Acknowledgment}
This research is supported by the National Natural Science Foundation of China (Grant Nos. 12494555, 12131005, 12201547), the Andrew Sisson Fund,  the Faculty Science Researcher Development Grant of the University of Melbourne, and the National Key R\&D Program of China (No. 2024YFA1016300).

\bibliographystyle{siamplain}
\bibliography{ref}

\end{document}